\theoremstyle{plain}
\newtheorem*{thm*}{Theorem}
\theoremstyle{plain}
\newtheorem{thm}{Theorem}[section]
\theoremstyle{definition}
\newtheorem{defn}[thm]{Definition}
\theoremstyle{plain}
\newtheorem{lem}[thm]{Lemma}
\theoremstyle{plain}
\newtheorem{prop}[thm]{Proposition}
\theoremstyle{plain}
\newtheorem{cor}[thm]{Corollary}
\theoremstyle{remark}
\theoremstyle{remark}
\newtheorem*{acknowledgement*}{Acknowledgement}
\title{Algebraic conditions for the positivity of sectional curvature}
\author{Dan Gregorian Fodor}
\email{dan.fodor52@yahoo.com}
\address{Faculty of Mathematics, Alexandru Ioan Cuza University, Ia\c si, Romania }
\date{}
\begin{document}

\pagenumbering{roman}

\pagestyle{myheadings}\markboth{}{}

\pagenumbering{arabic}
\pagestyle{myheadings}

\maketitle

\begin{abstract}
We examine algebraic conditions for the sectional positivity of the Riemann curvature operator. We describe sufficient conditions for dimension $n=4$, and complete characterisation for a dense open subset of the space of operators in dimension $4$. We briefly examine higher-dimensional curvature operators.
\end{abstract}

\section{Introduction}
Positive sectional curvature is an area of much interest. Positive curvature operators are easy to characterise algebraically: the critical values of a curvature operator are the roots of its characteristic polynomial, thus, the operator is positive if and only  its coefficients are of alternating sign. In this paper we apply methods of algebraic geometry to obtain an object analogous to the "characteristic polynomial": for a dense open subset of the set of  $4$-curvature operators, the real roots of our polynomial are exactly the critical values of sectional curvature. Our polynomial is obtained by applying the discriminant to a specific algebraic object constructed from the curvature operator. Similar results to ours have recently been obtained in  \cite{New}, Theorem C.

The \emph{discriminant} of a polynomial $p$ of degree $d$ is an algebraic expression of its coefficients that gives zero if and only if the polynomial has a multiple complex root. It is equal to the determinant of the Sylvester matrix of $p$ and $p'$ (\cite[Chapter 4]{A1}). It is important to note that when we set the top coefficient of our degree-$d$ polynomial to $0$, the value of the degree-$d$ discriminant expression also becomes 0. We do not recover the expression of the $d-1$ discriminant in its remaining coefficients.

We use $ \mathrm{disc_y}(p(x,y))$  to denote the discriminant of the algebraic expression   $p(x,y)$, with respect to to the free variable $y$. Since we are working in a single point with fixed metric, we do not explicitly write out the indices of our tensors. The metric gives an equivalence between covariant and contravatiant indices, allowing us to always obtain geometrically sound quantities.

The critical values of sectional curvature can be characterised in terms of a Lagrangian.  This method has be previously applied by Thorpe (see \cite{J},\cite{J2}), Singer (see \cite{ST}, section 2), and P{\"u}ttmann (see \cite{P}, Section 3.1). We construct the Lagrangian for $4$-curvature operators in Proposition \ref{prop1} and for higher dimensional curvature operators  in Section 4. For the $4$-dimensional case, we obtain  $L_R(v,x,y) = vRv - x\cdot (vIv-1) - y \cdot (vKv)$ , where $K$ is the $4$-volume form, and $I$ is the identify operator on the set of $2$-forms. The critical values of sectional curvature are the $x$-components of the critical points $(v,x,y)$ of our Lagrangian. We define $(x,y)$ as a \emph{critical point} of the operator $R$ if there exists a $v$ making $(v,x,y)$ a critical point for our Lagrangian (see Definition \ref{d1}). By Theorem \ref{thm1}, we give a characterisation of such points purely in terms of the coefficients of $p(x,y)= \det(R-xI-yK)$, thus eliminating reference to $v$ and making the switch from a geometric to an algebraic viewpoint. By Theorem \ref{tx}, we show that the set of real roots of $q(x) =\mathrm{disc_y}(p(x,y))$ include the set of critical values of sectional curvature. By Theorem \ref{nt}, we show that when $\mathrm{disc(q)}\neq 0$, the two sets coincide. It is in principle possible, by the Tarski–Seidenberg theorem \cite{A1}, to generate a complete quantifier-free description of the conditions for positive sectional curvature. However, such descriptions tend to be long, unstructured, and difficult to compute. Our result trades completeness, (necessity only works on a dense open subset of the space of curvature operators) for simplicity.
Section $4$ examines the possibility of an algebraic approach to higher dimensional curvature operators, and shows the failure of a naive generalization of our theorem. Section $5$ applies algebraic methods and results from earlier sections to recover results form \cite{J}.

\begin{thm}\label{main} {\bf (Main result)}~ \\
Let $n =4$ and consider a given Riemann tensor $R$ as an operator on $\Lambda^2$, the $6$-space of $2$-forms. Let $I$ be the identity operator on $\Lambda^2$, and $K$ be the $4$-volume form with $2$ indices lifted. \\ Let $p(x,y) = \det(R-xI-yK)$, and $q(x) = \mathrm{disc_y}(p(x,y))$. \\ The set of real roots of $q$ includes the set of critical values of the sectional curvature of $R$. If $\mathrm{disc_x}(q(x))\neq 0$ the two sets coincide.\\ As such, if all real roots of $q$ are positive (nonnegative), then $R$ is sectionally positive (nonnegative).
When $\mathrm{disc_x}(q(x))\neq 0$, the converse also holds: $R$ is sectionally positive (nonnegative) if and only if all real roots of $q$ are positive (nonnegative).

\end{thm}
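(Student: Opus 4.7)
The plan is to assemble the main result by chaining together the preceding theorems. First, by Proposition \ref{prop1}, the critical values of the sectional curvature of $R$ are the $x$-coordinates of critical points $(v,x,y)$ of the Lagrangian $L_R$: stationarity in $v$ yields $(R - xI - yK)v = 0$, while stationarity in $x$ and $y$ gives the normalization $vIv = 1$ and the simplicity constraint $vKv = 0$ that identifies $v$ with an honest 2-plane. Theorem \ref{thm1} then eliminates $v$ from this condition, reexpressing the critical-pair condition algebraically in terms of the coefficients of $p(x,y) = \det(R - xI - yK)$. Theorem \ref{tx} packages this further by showing that every critical value of sectional curvature appears as a real root of the discriminant $q(x) = \mathrm{disc}_y(p(x,y))$.

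Once this inclusion is in hand, the sufficiency half of the theorem follows from a standard compactness argument. Sectional curvature is a continuous function on the compact Grassmannian of 2-planes in $\mathbb{R}^{4}$, so it attains both its minimum and its maximum, and any such extremum is in particular a critical value. The sign of sectional curvature on all 2-planes is therefore determined by the signs of its critical values, so if every real root of $q$ is positive (resp. nonnegative), then every critical value is positive (resp. nonnegative), and hence the sectional curvature of $R$ is positive (resp. nonnegative).

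For the converse under the genericity hypothesis $\mathrm{disc}_x(q(x)) \neq 0$, I would invoke Theorem \ref{nt}, which upgrades the inclusion of Theorem \ref{tx} to an equality: the real roots of $q$ coincide exactly with the critical values of sectional curvature. Hence in the generic case, sectional positivity (resp. nonnegativity) is equivalent to positivity (resp. nonnegativity) of all real roots of $q$, and both statements of the main theorem are established.

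The substantive algebraic and geometric work is carried out by the earlier results — especially Theorems \ref{thm1} and \ref{nt} — so at the level of the main theorem the remaining effort is logical bookkeeping. The only point that requires care, and the main place where one can go wrong, is distinguishing real roots of $q$ that come from a complex double root of $p(x,\cdot)$ from those that come from a genuine real critical bivector in the kernel of $R - xI - yK$; this distinction is exactly what the genericity hypothesis $\mathrm{disc}_x(q) \neq 0$ is designed to exclude, by ruling out the tangential coincidences at which a complex conjugate pair of critical pairs could collide on the real axis without producing a real critical 2-plane.
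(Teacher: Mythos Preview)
Your proposal is correct and matches the paper's own proof, which consists of exactly this: combining Theorem \ref{tx} (every critical value of sectional curvature is a real root of $q$) with Theorem \ref{nt} (under $\mathrm{disc}_x(q)\neq 0$ every real root of $q$ is a critical value), together with the compactness of the Grassmannian to pass from critical values to global sign. The additional commentary you give on what the genericity hypothesis excludes is accurate and in the spirit of Lemmas \ref{rel} and \ref{mroot}.
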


\section{Preliminary notions}
Here we present some lemmas and definitions that will be used trough the rest of the article. First we shall study the algebraic conditions for the positivity of the curvature operator.
\begin{thm} \label{t_1}
A real symmetric matrix $M$ is positive definite if and only if the coefficients of its characteristic polynomial are of alternating sign. It is negative definite if and only if its coefficients are positive.
\end{thm}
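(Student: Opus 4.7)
The plan is to combine the spectral theorem for real symmetric matrices with a direct sign count of the characteristic polynomial expanded via elementary symmetric functions of the eigenvalues. Fixing the convention $p(x) = \det(xI - M) = \prod_{i=1}^{n}(x-\lambda_i)$, where all $\lambda_i \in \mathbb{R}$ by symmetry of $M$, one gets the expansion $p(x) = \sum_{k=0}^{n} (-1)^k e_k(\lambda)\, x^{n-k}$ with $e_k$ the $k$-th elementary symmetric polynomial of the eigenvalues.

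For the forward directions the sign count is immediate. If $M$ is positive definite then every $\lambda_i > 0$, every $e_k(\lambda) > 0$, and the coefficient of $x^{n-k}$ has sign $(-1)^k$, so the coefficients alternate. If $M$ is negative definite, the identity $e_k(-\lambda) = (-1)^k e_k(\lambda)$ together with $-\lambda_i > 0$ gives $(-1)^k e_k(\lambda) > 0$, so every coefficient of $p$ is positive.

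For the converses I would evaluate $p$ on the appropriate ray. When the coefficients of $p$ alternate in sign, substituting $x = -t$ with $t \ge 0$ makes every term of $p(-t)$ carry the same overall sign $(-1)^n$, and since the constant term is nonzero (it has a definite sign) this forces $p(-t) \neq 0$; so $p$ has no nonpositive real root, and combined with the a priori reality of the spectrum this forces every eigenvalue to be strictly positive. When every coefficient of $p$ is positive, evaluating at $x = t \ge 0$ gives $p(t) > 0$ on the nonnegative ray, so every root is strictly negative.

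The only conceptual subtlety is that reality of the spectrum, guaranteed by symmetry, is exactly what upgrades a Descartes-style ``no nonpositive real roots'' conclusion into the full statement that every eigenvalue lies in the desired half-line; without the symmetry hypothesis the alternating-sign condition would only bound the count of real roots of a given sign and would leave complex roots uncontrolled.
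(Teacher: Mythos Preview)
Your proof is correct and follows essentially the same route as the paper: both expand $\det(xI-M)$ via elementary symmetric polynomials of the eigenvalues for the forward direction, and both argue for the converse that an alternating-sign polynomial cannot vanish on $(-\infty,0]$, then invoke reality of the spectrum. Your substitution $x=-t$ is a slightly crisper way of stating what the paper phrases as an inductive claim that such a polynomial is of constant sign on $(-\infty,0]$, but the underlying idea is identical.
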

\begin{proof}
A matrix is positive-definite if and only if $vMv > 0, \forall v \neq 0$. Due to $M$ being real symmetric, $M$ is positive definite if and only if all its eigenvalues are positive: $\forall k\in\{1..n\},\:\lambda_{k}>0$. The characteristic polynomial $T_{M}$ satisfies:
$$\det(Ix-M) =  \sum_{k=0}^{n} (-1)^{n-k} e_{k}(\lambda_1,\lambda_2,..\lambda_n) x^k $$
where $e_k$ denotes the $k$'th elementary symmetric polynomial. We know that if the eigenvalues are positive, then the elementary symmetric polynomials are positive, and the coefficients of $T_{M}$ are of alternating sign. For the converse, we use the coefficients of $T_M$ to construct it from its Taylor series. We may prove inductively that a polynomial with coefficients of alternating sign is monotonous and of constant sign on the $(-\infty,0]$ interval. Therefore, all its real roots must be in the  $(0,\infty]$ interval. As the roots of $T_M$ are the eigenvalues $\lambda_{k}$, these must be positive. .

For negative-definiteness, observe that $M$ is positive definite if and only if $-M$ is negative definite. Knowing the relationship between $M$ and $-M$'s eigenvalues and applying the above result to $-M$, we see that $M$ is negative-definite if and only if the coefficients of its characteristic polynomial are positive. This completes our proof.
\end{proof}
\begin{cor}
A curvature tensor $R$, taken as an operator on the $\frac{n(n-1)}{2}$-dimensional space of $2$-forms $\Lambda_{2}$, is positive definite if and only if the coefficients of  $T_{R}(x) = \det(Ix-R)$ are of alternating sign.
\end{cor}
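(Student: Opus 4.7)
The plan is to reduce the corollary directly to Theorem \ref{t_1} by verifying that $R$, viewed as an endomorphism of $\Lambda^2$, is a real symmetric operator acting on a real inner-product space of dimension $\binom{n}{2}=\tfrac{n(n-1)}{2}$.

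First I would recall the standard algebraic symmetries of the Riemann curvature tensor $R_{ijkl}$: antisymmetry in the first pair of indices, antisymmetry in the second pair, and the pair-exchange symmetry $R_{ijkl}=R_{klij}$. The two antisymmetries guarantee that the $(2,2)$-tensor $R^{ij}{}_{kl}$ (indices raised via the metric, as flagged in the introduction) descends to a well-defined linear operator on $\Lambda^2$. The pair-exchange symmetry is then precisely the condition that this operator be symmetric with respect to the natural inner product on $\Lambda^2$ induced by the metric.

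With this in hand, the corollary is essentially immediate: fix any orthonormal basis of $\Lambda^2$, let $M$ denote the matrix of $R$ in that basis, and apply Theorem \ref{t_1} to $M$. Positive definiteness of $R$ as a quadratic form on $\Lambda^2$ matches positive definiteness of $M$ in the classical sense, and $T_R(x)=\det(Ix-R)=\det(Ix-M)=T_M(x)$ by construction, so the alternating-sign characterization of Theorem \ref{t_1} transfers verbatim.

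The only delicate point is the verification that pair-exchange symmetry translates into matrix symmetry rather than into some transposition twisted by the inner product. I would handle this by writing out the bilinear form $(\omega,\eta)\mapsto \omega R \eta$ on decomposable $2$-forms $\omega=e_i\wedge e_j$ and $\eta=e_k\wedge e_l$ in an orthonormal frame, and checking directly that $(e_i\wedge e_j)\,R\,(e_k\wedge e_l)=R_{ijkl}=R_{klij}=(e_k\wedge e_l)\,R\,(e_i\wedge e_j)$. No substantial obstacle is expected; the real content of the corollary lies in the symmetry of the curvature operator on $\Lambda^2$, and the rest is a direct appeal to the preceding theorem.
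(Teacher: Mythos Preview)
Your proposal is correct and matches the paper's approach: the paper states the corollary immediately after Theorem~\ref{t_1} with no proof at all, treating it as an immediate consequence once one knows that the curvature operator on $\Lambda^2$ is symmetric. Your explicit verification of the pair-exchange symmetry is more detail than the paper provides, but the underlying argument is the same direct appeal to Theorem~\ref{t_1}.
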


\noindent We define a $2$-form $W$ as being decomposable if and only if there exist $A, B,$   $1$-forms, such that $A \wedge B = W$.

\begin{lem}
A $2$-from $W$ is decomposable if and only if $W \wedge W = 0$.
\end{lem}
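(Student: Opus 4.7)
The plan is to treat each implication separately. The forward direction is immediate from the anticommutativity of $\wedge$ on $1$-forms: if $W = A\wedge B$, then
$$W \wedge W = A\wedge B\wedge A\wedge B = -A\wedge A\wedge B\wedge B = 0,$$
using $A\wedge A = 0$.

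For the converse, I would use the interior product. If $W = 0$ there is nothing to prove, so assume $W \neq 0$ and pick a vector $v$ with $A = \iota_v W$ nonzero. Since $\iota_v$ is a graded derivation and $(\iota_v W)\wedge W = W\wedge (\iota_v W)$ because $W$ has even degree,
$$0 = \iota_v(W\wedge W) = 2\,A\wedge W.$$
I then invoke the elementary fact that for a nonzero $1$-form $A$, the equation $A\wedge W = 0$ forces $W = A\wedge C$ for some $1$-form $C$: extending $A = e_1$ to a basis $e_1,\dots,e_n$ and expanding $W = \sum_{i<j} w_{ij}\, e_i\wedge e_j$, the condition $A\wedge W = 0$ becomes $\sum_{2\le i<j} w_{ij}\, e_1\wedge e_i\wedge e_j = 0$, and linear independence of these basis $3$-forms forces $w_{ij} = 0$ for $i,j\ge 2$; hence $W = A\wedge C$ with $C = \sum_{j\ge 2} w_{1j}\, e_j$.

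The main obstacle is really the converse, which I have reduced to this factorisation step. An alternative route uses the Darboux normal form for alternating $2$-forms: write $W = \sum_{k=1}^{r} e_{2k-1}\wedge e_{2k}$ in a suitable basis and compute
$$W\wedge W = 2\sum_{1\le i<j\le r} e_{2i-1}\wedge e_{2i}\wedge e_{2j-1}\wedge e_{2j},$$
where the $4$-forms on the right are distinct basis elements of $\Lambda^4$; hence $W\wedge W = 0$ forces $r\le 1$, i.e.\ $W$ is decomposable. Either approach reduces the problem to a short basis calculation; I would favour the contraction argument since it avoids invoking the structure theorem.
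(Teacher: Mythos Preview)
Your proof is correct. Interestingly, the paper takes exactly what you describe as the ``alternative route'': it invokes the spectral decomposition of a skew-symmetric matrix to write $W = \sum_{k=1}^{m} \lambda_k\, V_{2k-1}\wedge V_{2k}$ with the $V_i$ orthonormal and $\lambda_k \neq 0$, then expands $W\wedge W$ as a sum of linearly independent $4$-forms and concludes $m\le 1$. Your preferred contraction argument is genuinely different: by applying $\iota_v$ you bypass the structure theorem entirely and reduce to the elementary divisibility fact $A\wedge W = 0 \Rightarrow W = A\wedge C$. This is more self-contained and works over any field, whereas the paper's version implicitly uses the inner product (orthonormality) to get the normal form. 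On the other hand, the paper's route makes the rank of $W$ visible and feeds directly into the next lemma about projecting $W\otimes W$ onto $\Lambda^4$, so it is better adapted to what follows.
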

\begin{proof}
Treating $W$ as a skew-symmetric matrix, it admits a decomposition:
$$W = \sum_{k=1}^{m} \lambda_k  (V_{2k-1} \wedge V_{2k})$$
where $\forall k,\lambda_k \neq 0$, and $V_0, V_1, \cdots, V_{2m}$ are a set of orthonormal $1$-forms. \\
Using this decomposition, we obtain:
$$W \wedge W = \sum^{m}_{a < b} 2\lambda_a \lambda_b(V_{2a-1} \wedge V_{2a} \wedge V_{2b-1} \wedge V_{2b})$$
Thus $W\wedge W=0$ if and only if $W$ is decomposable.
\end{proof}
\begin{lem}\label{vform}
Let $K_1, K_2, \cdots, K_{\binom{n}{4}}$ be an orthonormal basis for the set of $4$-forms in dimension $n$. A $2$-from $W$ is decomposable if and only if $$\forall p\in\{1,2,\cdots \binom{n}{4}\},\:\, W_{ab} (K_p)^{abcd} W_{cd} = 0$$ In particular, for $n=4$, $W$ is decomposable if and only if $W_{ab} K^{abcd} W_{cd} = 0$, where $K$ is the $4$-volume form.
\end{lem}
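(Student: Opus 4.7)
The plan is to recognise the scalar $W_{ab}(K_p)^{abcd} W_{cd}$ as (a constant multiple of) the inner product of the $4$-form $W \wedge W$ with the basis element $K_p$, and then use the previous lemma.

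First I would make the interpretation precise. Writing $W = \tfrac12 W_{ab}\, dx^a \wedge dx^b$, one has
\[
W \wedge W \;=\; \tfrac{1}{4}\, W_{ab} W_{cd}\, dx^a \wedge dx^b \wedge dx^c \wedge dx^d ,
\]
so the components of $W \wedge W$, viewed as a fully antisymmetric $(0,4)$-tensor, are obtained from $W_{ab}W_{cd}$ by antisymmetrisation. Because $(K_p)^{abcd}$ is already fully antisymmetric in $abcd$, contracting $W_{ab}W_{cd}$ with $(K_p)^{abcd}$ automatically performs that antisymmetrisation, and so
\[
\langle W \wedge W,\, K_p \rangle \;=\; c\, W_{ab}\, (K_p)^{abcd}\, W_{cd}
\]
for a fixed nonzero combinatorial constant $c$ (independent of $W$ and $p$). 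The numerical value of $c$ is irrelevant for the statement, so I would just verify nonvanishing and not grind through the factorials.

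Once this identification is in place, the equivalence is immediate. Since $K_1,\dots,K_{\binom{n}{4}}$ form an orthonormal basis of $\Lambda^4$, one has $W\wedge W = 0$ if and only if $\langle W\wedge W, K_p\rangle = 0$ for every $p$, i.e.\ if and only if $W_{ab}(K_p)^{abcd}W_{cd}=0$ for all $p$. Combining this with the previous lemma, which asserts that $W$ is decomposable iff $W \wedge W = 0$, yields the claimed equivalence. The specialisation to $n=4$ then follows because $\Lambda^4$ is one-dimensional with the volume form $K$ as a basis, so a single condition suffices.

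The only real care-point is the constant $c$: one must check that contracting the (nonantisymmetrised) product $W_{ab}W_{cd}$ with $(K_p)^{abcd}$ genuinely reproduces the inner product of $W \wedge W$ with $K_p$ up to a \emph{nonzero} scalar. This is a short symmetry argument — the terms that would appear under full antisymmetrisation of $W_{ab}W_{cd}$ either repeat the original contraction or vanish by the symmetry of $W_{ab}W_{cd}$ in the pair exchange $(ab)\leftrightarrow(cd)$ against the antisymmetry of $(K_p)^{abcd}$ in the same exchange — so I expect this to be the most delicate part of the write-up, but nothing beyond bookkeeping.
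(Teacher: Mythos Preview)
Your proposal is correct and follows essentially the same route as the paper: identify $W_{ab}(K_p)^{abcd}W_{cd}$ as (a nonzero multiple of) $\langle W\wedge W, K_p\rangle$, then invoke the previous lemma together with the fact that the $K_p$ span $\Lambda^4$. One small slip in your closing remark: a $4$-form satisfies $(K_p)^{cdab}=(K_p)^{abcd}$ (the pair swap is an even permutation), so in fact every term in the antisymmetrisation reproduces the original contraction rather than some of them vanishing---but this only strengthens the conclusion that $c\neq 0$.
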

\begin{proof}
Using the previous lemma, $W$ is decomposable if and only if $W\wedge W = 0$. However, $W\wedge W$ is the antisymmetric part of $W \otimes W$, ie. the projection of $W \otimes W$ onto the space of $4$-forms. Therefore, it is zero if and only if $$\forall p\in\{1,2,\cdots \binom{n}{4}\},\:\,  W_{ab} (K_p)^{abcd} W_{cd} = 0.$$
\end{proof}
\section {Curvature operators in dimension 4}
Let $S_2$ be the set of decomposable $2$-forms of norm $1$. This is the Grassmannian of oriented $2$-planes, and an affine variety in the unit sphere of the space of $2$-forms $\Lambda_{2}$. The curvature operator $R$ is normally taken as a quadratic form on the space of $2$-forms. We consider its restriction to $S_2$, given by $f:S_2 \to {\mathbb R}$, $f(v)=vRv$.

The curvature operator $R$ is \emph{ defined} to be sectionally positive if $f(v)>0, \forall v\in S_2$.

Due to the fact that $S_2$ is compact we obtain the following.  $R$ is sectionally positive if and only if $f(v)>0$, $\forall v\in S_2$ such that $v$ is a critical point for $f$. We shall study the critical points of $f$ on $S_2$, in dimension $4$, using Lagrange multipliers. This method is found in [Singer and Thorpe,\cite{ST}, Section 2] and [P{\"u}ttmann \cite{P}, Section 3.1].

Maximise $vRv$ subject to constraints $v \wedge v =0$ and $vIv = 1$, where $I$ is the norm on $2$-forms (it coincides with the curvature operator of constant curvature 1). In dimension $4$, $v \wedge v =0$ becomes $vKv = 0$, where $K$ is the $4$-volume form Lemma (\ref{vform}). The resulting Lagrangian function is
\begin{eqnarray} L_R(v,x,y) = vRv - x\cdot (vIv-1) - y \cdot (vKv). \label{lr}
\end{eqnarray}

Its gradient gives us the following conditions for critical points on $v$:
\begin{itemize}
\item[1)] $vIv = 1$;
\item[2)] $vKv = 0$;
\item[3)] $(R-xI-yK)v=0$.
\end{itemize}
Contracting the third equation with $v$ we obtain  $v(R-xI-yK)v=0$, which simplifies to $vRv = x$. The Lagrange multiplier allows us to reformulate our problem:
\begin{prop} \label{prop1} A $4$-curvature operator  $R$ is sectionally positive if and only if for all triples $(v,x,y)$ satisfying 1), 2), 3), we have $x>0$.
\end{prop}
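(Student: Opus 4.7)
The plan is to prove the biconditional by combining compactness of $S_2$ with a Lagrange-multiplier reading of conditions (1)--(3). In one sentence: sectional positivity means $f(v) = vRv > 0$ for all $v\in S_2$; by compactness of $S_2$ this is equivalent to $f > 0$ at every critical point of $f|_{S_2}$; and the critical points of $f|_{S_2}$ are precisely the $v$'s appearing in triples $(v,x,y)$ solving (1)--(3). Once this dictionary between ``sectional'' and ``critical'' quantities is in place, both directions reduce to the contraction trick $v\cdot(3)$.

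For the forward implication, I would take $R$ sectionally positive and a triple $(v,x,y)$ satisfying (1)--(3). By (1), $v$ has unit norm, and by (2) together with Lemma \ref{vform}, $v$ is decomposable; hence $v\in S_2$. Contracting (3) on the left with $v$ yields $vRv - x(vIv) - y(vKv) = 0$, and (1), (2) simplify this to $vRv = x$. Sectional positivity of $R$ then gives $x = f(v) > 0$.

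For the reverse implication, I would use that $S_2$ is compact (closed in the unit sphere of $\Lambda^2$), so $f$ attains its minimum on $S_2$ at some $v_0$. Applying Lagrange multipliers to the constraints $vIv = 1$ and $vKv = 0$ produces $x_0, y_0\in\mathbb R$ with $(R - x_0 I - y_0 K)v_0 = 0$, and the same contraction trick gives $v_0 R v_0 = x_0$. The hypothesis of the proposition then forces $x_0 > 0$, so $\min_{S_2} f > 0$, i.e., $R$ is sectionally positive.

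The point requiring care — and that I expect to be the one non-cosmetic step — is the applicability of Lagrange multipliers, namely regularity of the constraint set $S_2 \subset \Lambda^2$: the gradients $\nabla(vIv) = 2v$ and $\nabla(vKv) = 2Kv$ must be linearly independent at each $v\in S_2$. This follows from the observation that no decomposable $2$-form in dimension $4$ is self- or anti-self-dual: if $Kv = \lambda v$, then $vKv = \lambda$, which by (2) forces $\lambda = 0$ and hence $Kv = 0$; but $K$ is (up to sign) the Hodge star on $\Lambda^2$, hence invertible, so $v = 0$, contradicting (1). With regularity in hand, the Lagrange-multiplier identification of critical points with solutions of (1)--(3) is standard, and the two implications above close the proof.
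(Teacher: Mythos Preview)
Your proposal is correct and follows essentially the same approach as the paper: compactness of $S_2$, the Lagrange-multiplier characterisation of critical points, and the contraction of condition (3) with $v$ to obtain $vRv = x$. The paper treats the proposition as an immediate reformulation arising from the preceding discussion and does not address the constraint-qualification issue; your verification that $\nabla(vIv)$ and $\nabla(vKv)$ are linearly independent on $S_2$ (via invertibility of $K$ and condition (2)) fills a gap the paper leaves implicit, so your write-up is in fact more complete.
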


The next step is to reformulate the problem in a way that does not make explicit reference to $v$. We shall use the following property to make the problem computationally easier: If $(v,x,y)$ corresponds to a critical point of the operator $L_R$, then $(v,0,0)$ corresponds to a critical point of the operator $L_{R'}$, where $R' = R - xI - yK$. The next lemma shall deduce when a symmetric operator $R$ has a vector $v$ satisfying:
\begin{itemize}
\item[1)] $vIv = 1$;
\item[2)] $vKv = 0$;
\item[3)] $Rv=0$.
\end{itemize}

\begin{lem} \label{lem1}
Let $R, K$ be symmetric operators. Let
\begin{eqnarray*}
\det(R-xI-yK) = \sum a_{mn}x^{m}y^{n} \\
P_k(x,y) = \sum_{m+n = k} a_{mn}x^{m}y^{n}
\end{eqnarray*}
The following properties hold:
\begin{itemize}
\item[1)] If $a_{00}, a_{10},\ldots,a_{k0} = 0$, then $P_k=0$.
\item[2)] There exists a $v \neq 0$ such that $Rv = 0$ and $vKv = 0$ if and only if, for the smallest $k$ such that $P_k \neq 0$, the coefficients of  $P_k(x,1)$ are neither all of the same sign, nor are they strictly alternating.
\end{itemize}
\end{lem}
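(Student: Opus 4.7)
My plan is to reduce both parts of the lemma to an analysis of the restricted form $K|_N$, where $N=\ker R$, by a block decomposition. Setting $r=\dim N$ and $W=N^{\perp}$, I would write $R$, $I$, and $K$ in the block form induced by $\mathbb R^d=N\oplus W$; since $R$ is symmetric and vanishes on $N$, the block $R_{WW}$ is invertible, while the $N$-block of $R$ itself is zero.

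For Part 1, I would rescale $(x,y)=(tx_0,ty_0)$ and apply the Leibniz formula to $\det(R-tx_0I-ty_0K)$. Every entry in a row indexed by $N$ is then a multiple of $t$, so each summand picks up at least $r$ factors of $t$ from those rows, giving $\det(R-tx_0I-ty_0K)=O(t^r)$. This forces $P_0=\cdots=P_{r-1}=0$. Under the hypothesis $a_{00}=\cdots=a_{k0}=0$, the polynomial $\det(R-xI)$ vanishes to order at least $k+1$ at $x=0$, hence $r\ge k+1$, and therefore $P_k=0$.

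Next I would extract the coefficient of $t^r$ from the same expansion: only permutations restricting separately to $N$ and $W$ survive at this minimal order, since a $W$-row landing in $N$ produces an extra factor of $t$. A direct evaluation yields
\begin{equation*}
P_r(x,y)=(-1)^r\det(R_{WW})\cdot\det(xI_N+yK_{NN}),
\end{equation*}
and both factors are nonzero polynomials, so $P_r\ne 0$. Hence the smallest $k$ with $P_k\ne 0$ is exactly $r$.

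For Part 2, the existence of $v\ne 0$ with $Rv=0$ and $vKv=0$ is equivalent to $K|_N$ admitting a nontrivial isotropic vector, i.e.\ to $K|_N$ being neither positive nor negative definite. Denoting the eigenvalues of $K|_N$ by $\mu_1,\ldots,\mu_r$,
\begin{equation*}
P_r(x,1)=(-1)^r\det(R_{WW})\prod_{i=1}^{r}(x+\mu_i),
\end{equation*}
so up to a nonzero scalar the coefficients of $P_r(x,1)$ are the elementary symmetric polynomials $e_0(\mu),\ldots,e_r(\mu)$. If all $\mu_i>0$ these share a single strict sign; if all $\mu_i<0$ they strictly alternate; in every other case (mixed signs, or some $\mu_i=0$) they do neither. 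The main delicate point will be the degenerate situation where some $\mu_i=0$ produces a zero coefficient in $P_r(x,1)$: I must check that this violates both strict conditions, consistent with the convention used in Theorem~\ref{t_1}, so that it falls into the ``neither'' case alongside the isotropic vector supplied by the zero eigenspace of $K|_N$.
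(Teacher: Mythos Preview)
Your proof is correct and follows essentially the same strategy as the paper: both pass to a basis adapted to $N=\ker R$, show that $P_j=0$ for $j<r=\dim N$, identify $P_r(x,1)$ with a nonzero scalar times the characteristic polynomial of $K|_N$, and then invoke the sign criterion for definiteness from Theorem~\ref{t_1}. The only difference is that the paper first quotes a determinant-sum formula of Marcus to express each $P_k$ as a sum over complementary minors before specializing to the adapted basis, whereas you extract $P_r$ directly via the $t$-scaling/Leibniz argument---a minor technical shortcut rather than a different route.
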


\begin{proof}
For the proof we shall make use of the following lema \cite{D1}:
\begin{lem}
Let $A,B$ be $n$-square matrices. Then:
$$\det(A+B) = \sum_r \sum_{\alpha,\beta} (-1)^{s(\alpha)+s(\beta)} \det(A[\alpha|\beta]) \det(B(\alpha|\beta))$$
where the outer-sum $r$ is over the integers $0,\ldots,n$, the inner sum is over all strictly increasing integer sequences of length $r$ chosen from $1,\ldots,n$, $A[\alpha|\beta]$ is the $r$-square submatrix of $A$ lying in rows $\alpha$ and columns $\beta$,  $B(\alpha|\beta)$ is the $(n-r)$-square submatrix of $B$ lying in rows complementary to $\alpha$ and columns complementary to $\beta$, and $s(\alpha)$ is the sum of the integers in $\alpha$.
\end{lem}

Applying the above lemma to $A=R$, $B=(-xI-yK)$, and separating the terms by degrees, we obtain:
$$P_k(x,y) = \sum_{\alpha,\beta}^{|\alpha| =|\beta|=k} (-1)^{s(\alpha)+s(\beta)} \det(-(xI+yK)[\alpha|\beta])\det(R(\alpha|\beta))$$

1) $a_{00}, a_{10},\ldots, a_{k0}$ are the first $k+1$ coefficients of $R$'s characteristic polynomial. Due to $R$ being symmetric and the coefficients being equal to $0$, the dimension of $\ker(R)$ is at least $k+1$. We know  the terms $\det(R(\alpha|\beta))$ in $P_{k}(x; y)$ go over all $n-k$ minors of $R$. Due to the dimension of $\ker(R)$, all these terms equal $0$. Therefore $P_k(x,y) = 0$.

2) There exists such a $v$ if and only if the intersection of the null-cone of $K$ and $\ker(R)$ is non-trivial. This is equivalent to the cone of the restriction of $K$ to $\ker(R)$ being non-trivial. Denote $K'$ to be the restriction of $K$ to the $\ker(R)$. There exists a $v\neq 0$ such that $vK'v =0$ if and only if $K'$, is neither strictly positive nor strictly negative. This translates to the coefficients of the characteristic polynomial. Denote $T_{K'} (x)$ to be the characteristic polynomial of $K'$. There exists a $v\neq 0$ such that $vK'v = 0$ if and only if the coefficients of $T_{K'}$ are neither all strictly of the same sign, nor are they strictly alternating: if $vK'v = 0$ then $K'$ is neither positive definite nor negative definite. The conditions on the coefficients of $T_{K'}$ follow from Theorem \ref{t_1}. The converse also follows: if the coefficients of $T_{K'}$  are neither of the same sign nor strictly alternating, then $K'$ is neither positive definite nor negative definite. There exists $v_1,v_2 \neq 0$ such that $v_1K'v_1 \leq 0$ and $v_2K'v_2 \geq 0$. The existence of $v\neq 0$, $vK'v =0$ then follows from the intermediate value theorem.

Let $P_{k}$ be the first non-zero homogenous polynomial of $\det(R-xI-yK)$. We now prove that  $P_{k}(x,1) = a_{k0} T_{K'}(x)$:
Given that $P_{k}$ is the first non-zero homogenous polynomial, the dimension of the kernel of $R$ is $k$. We select an orthonormal basis for our matrices such that the kernel of $R$ is spanned by the first $k$ eigenvectors. This gives $m_{ab}=0, \forall a\in\{1..k\}, \forall b\in\{1..n\}$, where $m_{ab}$ are the matrix coefficients of $R$. Applying to the above formula of $P_k$, we obtain:
$$P_k(x,y) = \det(-(xI+yK)[\alpha|\alpha])\det(R(\alpha|\alpha))$$ where $\alpha= \{1..k\}$. We have $(R(\alpha|\alpha))$, ie the complementary minor of the first $k,k$ minor, to be the only $n-k,n-k$ minor with non-zero determinant, as all the other minors intersect $R$'s kernel. Setting $y=1$, and knowing that the first $k,k$ minor of $K$ is its restriction to the kernel of $R$ shows $P_{k}(x,1) = a_{k0} T_{K'}(x)$.

This proves our result regarding the coefficients of $P_k$.
\end{proof}
\begin{defn} \label{d1}
Given a $4$-curvature operator $R$ and $K$ the volume form,  a point  $(x_1,y_1) \in \mathbb{R}^2$ is a \emph{critical point} for $R$ if the coefficients of the first non-zero homogenous polynomial of  $p(x,y) = \det((R-x_{1}I-y_{1}K) -xI-yK)$ are neither all of the same sign, nor all of strictly alternating sign. Equivalently, $(x_1,y_1)$ is a critical point of $R$ if there exists a vector $v$ making $(v,x_1,y_1)$ a critical point for the Lagrangian $L_{R}$.
\end{defn}

Combining Proposition \ref{prop1} with the previous lemma, we further refine our criteria:

\begin{thm} \label{thm1}
A $4$-curvature operator $R$ is sectionally positive (nonnegative) if and only if for all critical points $(x_1,y_1)$, $x_1$ is positive (nonnegative).
\end{thm}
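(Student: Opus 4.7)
The plan is to chain together Proposition \ref{prop1} with the two formulations in Definition \ref{d1}; almost all the work has been carried out in the preceding lemmas. Proposition \ref{prop1} already asserts that $R$ is sectionally positive iff every triple $(v,x,y)$ satisfying the Lagrangian conditions 1), 2), 3) has $x>0$. The geometric half of Definition \ref{d1} declares $(x_1,y_1)$ to be a critical point of $R$ precisely when some $v$ completes it to such a Lagrangian critical triple. Substituting this into Proposition \ref{prop1} gives the result immediately, modulo verifying the equivalence asserted inside Definition \ref{d1} and handling the nonnegative case.

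The substantive step is to justify that the algebraic and geometric halves of Definition \ref{d1} really agree, i.e., that the condition on the coefficients of the first nonzero homogeneous component of $p(x,y)=\det((R-x_1I-y_1K)-xI-yK)$ is equivalent to the existence of a $v$ completing $(x_1,y_1)$ to a Lagrangian critical triple. For this I would apply Lemma \ref{lem1} directly to the shifted operator $R'=R-x_1I-y_1K$: part 2) of that lemma characterises precisely when there is a nonzero $v$ with $R'v=0$ and $vKv=0$, in terms of the coefficients of $\det(R'-xI-yK)$. Condition 3) of Proposition \ref{prop1} is the equation $R'v=0$, and condition 2) is $vKv=0$. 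Since $I$ is the identity on $\Lambda^2$, any nonzero $v$ satisfies $vIv>0$ and can be rescaled to meet condition 1); both $R'v=0$ and $vKv=0$ are homogeneous in $v$, so the rescaling is harmless. Thus the algebraic formulation in Definition \ref{d1} is equivalent to the existence of a $v$ making $(v,x_1,y_1)$ a Lagrangian critical triple for $R$, and the positive case of the theorem follows.

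For the nonnegative version one needs the obvious analogue of Proposition \ref{prop1}: $R$ is sectionally nonnegative iff every Lagrangian critical triple has $x\geq 0$. The forward direction is trivial because at any such triple $x=vRv$. For the converse, $f(v)=vRv$ is continuous on the compact set $S_2$, so it attains its minimum at some $v_0$; by the Lagrange multiplier argument preceding Proposition \ref{prop1}, $v_0$ extends to a critical triple $(v_0,x_0,y_0)$ with $x_0=v_0Rv_0\geq 0$ by hypothesis, whence $f\geq x_0\geq 0$ on $S_2$. Combining this analogue with the equivalence established above yields the nonnegative case. The only point that requires mild care is the rescaling step and the compactness argument for nonnegativity; there is no real obstacle since all the heavy lifting is already packaged in Lemma \ref{lem1} and Proposition \ref{prop1}.
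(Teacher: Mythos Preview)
Your proposal is correct and follows essentially the same route as the paper: apply Lemma \ref{lem1} to the shifted operator $R'=R-x_1I-y_1K$ to verify the equivalence in Definition \ref{d1}, then invoke Proposition \ref{prop1}. Your treatment is in fact more thorough than the paper's, since you make explicit the rescaling step for condition 1) and spell out the compactness argument needed for the nonnegative case, both of which the paper leaves implicit.
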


\begin{proof}
Denoting $R' = (R-x_{1}I-y_{1}K)$, according to \ref{lem1}, points satisfying the above conditions are exactly those pairs $(x_1,y_1)$ for which there exists a $v$ such that  $vIv = 1$, $vKv = 0$, and  $v(R')=0$. Back-substituting $R'$ as  $(R-x_{1}I-y_{1}K)$, we see, due to \ref{prop1}. that  $(v,x_1,y_1)$ are the critical points of $L_R$, thus completing the proof.
\end{proof}

This gives us a complete algebraic description of the positivity of $4$-curvature operators, having only two quantified variables, $x$ and $y$. The quantifiers can be eliminated with the Tarski–Seidenberg theorem \cite{A1}, however, the resulting description is long and unwieldy. We shall instead study a dense open subset of the set of operators, on which the criteria for positive sectional curvature become simpler to describe.

\begin{lem} \label{lem2}
Let $(x_1,y_1)$ be a critical point of a $4$-curvature operator $R$, and define $R' = R-x_{1}I-y_{1}K$ and $p(x,y) = \det(R' -xI-yK)$. Then, for the given $p$, we have $a_{00}=0$ and $a_{01}=0$.
\end{lem}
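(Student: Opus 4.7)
The plan is to read off both coefficients directly from the Laplace-type expansion used in the proof of Lemma \ref{lem1}, after using the vector $v$ that Definition \ref{d1} supplies. By that definition there exists a nonzero $v$ with $vIv=1$, $vKv=0$, and $R'v=0$.

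The value $a_{00}=p(0,0)=\det(R')$ vanishes immediately, since $R'v=0$ with $v\neq 0$ forces $R'$ to be singular. For $a_{01}$ the strategy is to set $x=0$, note that $a_{01}$ is the coefficient of $y$ in $p(0,y)=\det(R'-yK)$, and expand using
\[
\det(R'-yK)=\sum_{r}\sum_{\alpha,\beta}(-1)^{s(\alpha)+s(\beta)}\det\bigl(R'[\alpha|\beta]\bigr)\det\bigl((-yK)(\alpha|\beta)\bigr).
\]
Only the terms with $|\alpha|=|\beta|=n-1$ contribute to the $y^{1}$ coefficient, because then the complementary $K$-submatrix is $1\times 1$, equal to $-y K_{ij}$ where $\{i\}$ and $\{j\}$ are the single indices complementary to $\alpha$ and $\beta$.

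The key reduction is a change of basis. Since $p(x,y)$ is invariant under orthogonal changes of basis on $\Lambda^{2}$, I choose an orthonormal basis $v_1,\dots,v_n$ with $v_1=v$. Symmetry of $R'$ together with $R'v_1=0$ forces the first row and column of $R'$ to vanish in this basis. Consequently the $(n-1)\times(n-1)$ minor $\det\bigl(R'[\{1,\dots,n\}\setminus i\mid\{1,\dots,n\}\setminus j]\bigr)$ vanishes unless $i=j=1$: otherwise the zero first row or column of $R'$ is contained in the submatrix. The one surviving term carries the factor $K_{11}=v_1^{T}Kv_1=vKv$, which is zero by the critical point condition. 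Hence $a_{01}=0$.

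The main bookkeeping obstacle is simply tracking which minors survive and pairing the surviving one against the vanishing entry $vKv$ of $K$; once the basis is adapted to $v$ the argument is essentially forced, and both equalities $a_{00}=0$ and $a_{01}=0$ follow from the single geometric fact that $v$ lies in $\ker R'$ and is $K$-isotropic.
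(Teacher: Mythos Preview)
Your argument is correct. Both coefficients are read off from an explicit minor expansion in a basis adapted to the vector $v$ furnished by Definition~\ref{d1}, and the key identification $K_{11}=vKv=0$ is exactly what kills the single surviving term.

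Your route differs from the paper's. The paper argues case by case on $\dim\ker(R')$: if the kernel is one-dimensional, it invokes the sign condition on $P_{1}(x,1)=a_{10}x+a_{01}$ from Definition~\ref{d1} (two numbers that are neither of the same sign nor strictly alternating must include a zero, and $a_{10}\neq 0$), while for higher-dimensional kernel it cites Lemma~\ref{lem1}(1) to force $P_{1}=0$. Your proof instead goes straight to the cofactor expansion of $\det(R'-yK)$ in a basis with $v_{1}=v$, uses the vanishing first row/column of $R'$ to isolate a single surviving $(n-1)$-minor, and pairs it with $K_{11}=vKv$. This buys you a uniform argument with no case split and no appeal to the sign criterion, at the cost of bypassing the structural formula $P_{k}(x,1)=a_{k0}T_{K'}(x)$ that the paper has set up. Both are short; yours is the more self-contained of the two.
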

\begin{proof}
If $a_{00} \neq 0$ then the kernel of $R'$ would be of dimension zero, there would exist no $v$ that makes $(x_1,y_1)$ a critical point. For $a_{00} = 0$ there are two situations that must be considered: Either the kernel of $R'$ is of dimension $1$, or the kernel of $R'$ is of dimension greater than $1$.

Assume the kernel is of dimension $1$. Then the coefficients of $P_1(x, 1) = a_{10}x+ a_{01}$ must be neither of strictly alternating sign nor all of the same sign. Therefore   $a_{01} = 0$.  Assume the kernel is of dimension greater than 1. Then $a_{00} = 0, a_{10} = 0$. According to Lemma \ref{lem1}, $P_1=0$, therefore $a_{01} = 0$.
\end{proof}

\begin{thm} \label{tx}
Let $R$ be a $4$-curvature operator. If $(x_1,y_1)$ is a critical point of $R$, then for $q(x) = \mathrm{disc_y}(\det(R -xI-yK))$ we have $q(x_1) = 0$. Therefore, if all the real roots of $q$ are positive (nonnegative), then $R$ is sectionally positive (nonnegative).
\end{thm}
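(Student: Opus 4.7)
The plan is to translate the two vanishing conditions from Lemma \ref{lem2} directly into the statement that $\det(R-xI-yK)$, evaluated at $x=x_1$ and viewed as a polynomial in $y$, has $y_1$ as a double root; this forces the $y$-discriminant to vanish at $x_1$.

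First I would fix a critical point $(x_1,y_1)$ of $R$ and set $R' = R - x_1 I - y_1 K$. A key algebraic observation is that
\begin{equation*}
\det(R - xI - yK) \;=\; \det\bigl(R' - (x-x_1)I - (y-y_1)K\bigr),
\end{equation*}
so if we write $\det(R' - uI - vK) = \sum_{m,n} a_{mn}\,u^m v^n$ as in Lemma \ref{lem1}, then
\begin{equation*}
P(x,y) := \det(R - xI - yK) = \sum_{m,n} a_{mn}\,(x-x_1)^m(y-y_1)^n.
\end{equation*}
By Lemma \ref{lem2} applied to $R'$, the coefficients $a_{00}$ and $a_{01}$ both vanish. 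Setting $x=x_1$ kills every term with $m\ge 1$, leaving
\begin{equation*}
P(x_1,y) = \sum_{n\ge 0} a_{0n}(y-y_1)^n = \sum_{n\ge 2} a_{0n}(y-y_1)^n,
\end{equation*}
so $(y-y_1)^2$ divides $P(x_1,y)$, i.e.\ $y_1$ is a multiple root of $P(x_1,\cdot)$.

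Next I would verify that $P(x,y)$ has well-defined degree $6$ in $y$ as an abstract polynomial whose leading coefficient in $y$ is a nonzero constant. Since $K$ is the Hodge star operator on $\Lambda^2$ in dimension $4$ (after raising two indices), $K^2 = I$ on the $6$-dimensional space $\Lambda^2$, so $\det K = \pm 1 \ne 0$; hence the coefficient of $y^6$ in $P(x,y)$ is $\det(-K)$, a nonzero constant independent of $x$. This ensures the discriminant $q(x) = \mathrm{disc}_y(P(x,y))$ is a genuine polynomial in $x$, and the standard characterisation applies: $q(x_1) = 0$ whenever $P(x_1,y)$ has a multiple root in $y$. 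Combined with the previous step, this gives $q(x_1) = 0$.

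Finally, the sectional positivity conclusion follows by assembling what we have with Theorem \ref{thm1}. If every real root of $q$ is positive (resp.\ nonnegative), then in particular the $x$-coordinate of every critical point of $R$, being among those roots, is positive (resp.\ nonnegative); Theorem \ref{thm1} then yields sectional positivity (resp.\ nonnegativity) of $R$. I expect no serious obstacle: the only point that demands a moment of care is the bookkeeping between the expansion of $\det(R'-uI-vK)$ in $(u,v)$ and the expansion of $P(x,y)$ in $(x-x_1, y-y_1)$, together with the verification that the leading $y$-coefficient of $P$ is a nonzero constant so that $\mathrm{disc}_y$ behaves as expected.
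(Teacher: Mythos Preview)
Your proposal is correct and follows essentially the same route as the paper: shift to $R' = R - x_1 I - y_1 K$, invoke Lemma~\ref{lem2} to get $a_{00}=a_{01}=0$, and read off that $y_1$ is a double root of $\det(R-x_1I-yK)$ so that the $y$-discriminant vanishes at $x_1$. Your extra care about the leading $y$-coefficient being a nonzero constant is handled by the paper in the separate Lemma~\ref{lk}, but folding it in here is harmless and arguably cleaner.
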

\begin{proof}
 If $(x_1,y_1)$ is a critical point, defining $R' = (R-x_{1}I-y_{1}K)$, and $p(x,y) = \det(R' -xI-yK)$, we have $a_{00} = 0$ and $a_{01} = 0$ for $p$ (see Lemma \ref{lem2}). Defining $p_1(x,y) = \det(R -xI-yK) = p(x-x_1,y-y_1)$, this means that $y_1$ is a double root in $p_1(x_1,y)$, taken as a polynomial of $y$. We have $p_1(x_1,y) = p(0,y) = \sum_{k=2..6} a_{0k}y^{k}$. This means that the discriminant in $y$ of $p_1(x,y) = \det(R -xI-yK)$ gives $0$ for $x=x_1$. Having  $q(x) = \mathrm{disc_y}(\det(R -xI-yK)) =  \mathrm{disc_y} (p_1(x,y))$, this completes the proof.
 \end{proof}

This gives us a sufficient condition for sectional positivity (non-negativity). However, it is stronger than necessary. Though all critical points $(x_1,y_1)$ of $R$ correspond to real roots $x_1$ of $q$, the converse is not true. The next step is studying the so-called "false" real roots of $q$ and defining for ourselves a dense open subset of the set of $4$-curvature operators on which there are none. This will turn out to be the set on which $\mathrm{disc_x}(q) \neq 0$.

\begin{lem}\label{lk}
For any given 4-curvature operator $R$, the polynomial   $q(x) = \mathrm{disc_y}(\det(R -xI-yK))$ has no roots introduced by the discriminant function due to the variance of the degree of the $y$ polynomial $\det(R -xI-yK)$ as $x$ varies.
\end{lem}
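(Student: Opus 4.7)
My plan is to identify precisely where the Sylvester-matrix expression for a discriminant can acquire extraneous roots and to rule out that mechanism here. When one computes $\mathrm{disc}_y(p(x,y))$ as the determinant of the Sylvester matrix of $p$ and $\partial_y p$, one is treating $p$ as a polynomial in $y$ of a fixed formal degree; if the leading coefficient in $y$ drops to zero at some $x = x_0$, the corresponding Sylvester matrix has a row of zeros, so its determinant automatically vanishes at $x_0$ regardless of whether $p(x_0,y)$ has a genuine multiple root in $y$. Accordingly, the proof reduces to showing that the coefficient of $y^6$ in $p(x,y) = \det(R - xI - yK)$ is a nonzero constant, independent of $x$.

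The leading $y$-coefficient I would read off directly from the Leibniz expansion. Each entry of the $6 \times 6$ matrix $R - xI - yK$ is affine in $y$, and the $y^6$ contribution can arise only by selecting the $-yK$ piece from every row, yielding $\det(-yK) = (-1)^6 \det(K)\, y^6 = \det(K)\, y^6$. This is already manifestly independent of $x$ and of $R$, so the remaining question is whether $\det(K) \neq 0$.

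For the nonvanishing I would invoke the standard identification of $K$ on $\Lambda^2$ with the Hodge star operator $\ast$ in dimension $4$. In Riemannian signature one has $\ast^2 = \mathrm{Id}$, and $\Lambda^2$ decomposes as the orthogonal sum of the $3$-dimensional self-dual and anti-self-dual eigenspaces, on which $K$ acts as $+1$ and $-1$ respectively. Therefore $\det(K) = (+1)^3 (-1)^3 = -1$, a nonzero constant.

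The only step that genuinely requires care is the identification of $K$ with the Hodge star and the resulting eigenvalue count; the rest is bookkeeping. Once this is in hand, the leading $y$-coefficient of $p(x,y)$ equals the constant $-1$ for every $x$, the $y$-degree of $p(x,y)$ is constantly $6$, the Sylvester matrix defining $q(x)$ has fixed size with a constant nonzero top entry, and $q$ cannot acquire any root from a drop of the $y$-degree of $p(x,y)$, which is the content of the lemma.
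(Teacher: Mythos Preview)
Your proof is correct and follows the same route as the paper: both observe that the leading $y$-coefficient of $\det(R - xI - yK)$ is the constant $-1$, independent of $x$, so the $y$-degree never drops and the discriminant acquires no spurious roots. The paper simply asserts this coefficient is $-1$; you supply the justification via $\det(K)$ and the Hodge-star eigenvalue count, which is a welcome elaboration but not a different argument.
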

\begin{proof}
The top coefficient of $\det(R -xI-yK)$, taken as a polynomial in $y$, is $-1$, corresponding to $-y^6$. The coefficient does not depend on $x$, so, taken as a polynomial of $y$,  $\det(R -xI-yK)$ is always of degree $6$, regardless of the value of $x$.
\end{proof}

\begin{lem} \label{mroot}
Let  $R$ be a curvature operator, and $(x_1,y_1)$ a tuple  such that $y_1$ is a multiple root of $f(y) =\det(R -x_1I-yK)$. Denote $p(x,y) =\det(R -(x_1+x)I-(y+y_1)K)$. If, for the given $p$, $a_{10} = 0$ then $x_1$ is a multiple root of  $q(x) = \mathrm{disc_y}(\det(R -xI-yK))$. In particular, if $\mathrm{disc_x}(q)\neq 0$, then for all such pairs  $(x,y)$ we have  $a_{10}\neq 0$.
\end{lem}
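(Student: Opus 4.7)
The plan is to work locally around $(x_1, y_1)$ and use a Weierstrass-style factorisation of $\tilde p(x, y) := \det(R - xI - yK)$. Writing $p(x, y) = \tilde p(x_1 + x, y_1 + y) = \sum a_{mn} x^m y^n$ as in the statement, the assumption that $y_1$ is a multiple root of $\tilde p(x_1, \cdot)$ amounts to $a_{00} = a_{01} = 0$ with $a_{0k} \ne 0$ for some multiplicity $k \ge 2$, and the extra hypothesis is $a_{10} = 0$. What must be shown is $q(x_1) = 0 = q'(x_1)$; the first equality is immediate from the multiple-root condition.

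First I would apply Hensel's lemma in the complete local ring $\mathbb{R}[[x - x_1]][y]$ to factor $\tilde p = G \cdot H$, where $G$ is a polynomial in $y$ of degree $6 - k$ with $G(x_1, y_1) \ne 0$ and $H(x, y) = (y - y_1)^k + b_{k-1}(x)(y - y_1)^{k-1} + \cdots + b_0(x)$ is the distinguished factor with every $b_i$ vanishing at $x_1$. The multiplicative identity
\begin{equation*}
\mathrm{disc_y}(\tilde p) \;=\; \mathrm{disc_y}(G)\cdot \mathrm{disc_y}(H)\cdot \mathrm{Res}_y(G,H)^2
\end{equation*}
then reduces the problem to $\mathrm{disc_y}(H)$: the resultant factor equals $G(x_1, y_1)^{2k}$ at $x = x_1$, hence is nonzero, while $\mathrm{disc_y}(G)$ contributes non-negative order. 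Differentiating $\tilde p = GH$ at $(x_1, y_1)$ and using $H(x_1, y_1) = 0$ gives $\partial_x \tilde p(x_1, y_1) = G(x_1, y_1)\cdot b_0'(x_1)$, so the hypothesis $a_{10} = 0$ is equivalent to $b_0'(x_1) = 0$, i.e.\ $b_0$ vanishing to order at least $2$ at $x_1$.

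The key step is then showing $\mathrm{disc_y}(H)$ vanishes to order $\ge 2$ at $x_1$. As a polynomial in $b_0, \ldots, b_{k-1}$, $\mathrm{disc_y}(H)$ is weighted-homogeneous of total weight $k(k-1)$ when $b_i$ carries weight $k-i$, so every monomial $\prod b_i^{m_i}$ appearing in it satisfies $\sum (k-i)m_i = k(k-1)$ and therefore $\sum m_i \ge k - 1$. Combined with the baseline $\mathrm{ord}_{x_1}(b_i) \ge 1$, this already handles the case $k \ge 3$. The delicate case is $k = 2$, where $\mathrm{disc_y}(H) = b_1^2 - 4 b_0$ and the coarse bound only yields order $1$; here the upgraded vanishing $\mathrm{ord}_{x_1}(b_0) \ge 2$ provided by $a_{10} = 0$ is precisely what is needed to push the order to $2$. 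This tight $k = 2$ case is where the hypothesis $a_{10} = 0$ is actually used, and it is the main obstacle. The final assertion of the lemma then follows by contraposition: if $\mathrm{disc_x}(q) \ne 0$, then $q$ has only simple roots, so no pair $(x, y)$ of the described form can satisfy $a_{10} = 0$.
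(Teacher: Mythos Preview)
Your argument is correct and takes a genuinely different route from the paper's proof. The paper works directly with the Sylvester matrix of $p$ and $\partial_y p$: after recentring, the vanishing of $a_{00},a_{01},a_{10}$ forces $p_0(x)=x^2m_0(x)$ and $p_1(x)=xm_1(x)$, and a Laplace expansion along the last two columns of the Sylvester matrix shows that every contributing $2\times 2$ minor is divisible by $x^2$, hence so is $\det S=\mathrm{disc}_y(p)$. Your approach instead factors $\tilde p=GH$ via Hensel's lemma over $\mathbb{R}[[x-x_1]]$, uses the multiplicativity $\mathrm{disc}_y(\tilde p)=\mathrm{disc}_y(G)\,\mathrm{disc}_y(H)\,\mathrm{Res}_y(G,H)^2$ (up to a unit), and then bounds $\mathrm{ord}_{x_1}\mathrm{disc}_y(H)$ by the weighted homogeneity of the generic discriminant. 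The paper's argument is more elementary and self-contained (pure linear algebra, no completion or factorisation), while yours is more structural: it cleanly isolates the case split on the multiplicity $k$, explains \emph{why} the hypothesis $a_{10}=0$ is only genuinely needed when $k=2$ (since for $k\ge 3$ the weight bound $\sum m_i\ge k-1\ge 2$ already suffices), and would generalise without change to ambient degree other than~$6$. One small point worth making explicit in your write-up is that the leading $y$-coefficient of $\tilde p$ is the unit $-1$, so the leading-coefficient factors in the discriminant/resultant identity are units in $\mathbb{R}[[x-x_1]]$ and do not affect vanishing orders.
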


\begin{proof}
We may write $p(x,y)= p_{0}(x) + yp_{1}(x) + y^2p_{2}(x) + \cdots  y^5p_{5}(x)  + (-y^6)$, where $p_{n}(x)$, $n\in\{0..5\}$, denote polynomials in $x$. Due to the vanishing coefficients (see Lemma \ref{lem1}), we can write $p_{0}(x)=x^2m_{0}(x)$ and $p_{1}(x)=xm_{1}(x)$. We define $k(x) = \mathrm{disc_y}(p(x,y)) = q(x+x_1)$. We shall now show that $k(x)$ is a multiple of $x^2$, thus $x_1$ is a multiple root of $q$. The discriminant of $p$ can be computed as the determinant of the Sylvester matrix $S$ of $p$ and $\frac{\partial p}{\partial y}$ (see \cite{A1}, chapter 4). We shall examine $S_2$, the submatrix formed from the last two columns of $S$. It takes the form: $$\left(\begin{smallmatrix} 0 && 0\\
.&&.\\
.&&.\\
x^2q_{0}(x) && 0 \\
x q_{1}(x) && x^2q_{0}(x)\\[9pt]
\dotfill&&\dotfill\\[9pt]
 0     &&      0\\
 .&&.\\
 .&&.\\
 x q_{1}(x)  && 0\\
 p_2(x) && x q_{1}(x)
\end{smallmatrix}\right)$$

We note that the following formula for the determinant of $S$:
$$\det(S) = \sum_{a<b} (-1)^{a+b}\det(M_{ab})\det(CM_{ab})$$
where $M_{ab}$ denotes the $2$-minor formed from the $a$ and $b$ rows of $S_{2}$, and $CM_{ab}$ denotes it's complementary minor in $S$. This is an application of Laplace's theorem (\cite{Det}, Sec.77).

We see that for all $2$-minors $M_{ab}$ of $S_2$, we have $\det(M_{ab})$ as a multiple of $x^2$. As such, $\det(S) = \mathrm{disc_y}(p(x,y))$ is a multiple of $x^2$. As  $k(x) = \mathrm{disc_y}(p(x,y)) = q(x+x_1)$, $q$ has a multiple root in $x_1$, thus completing our proof.

\end{proof}

\begin{lem} \label{rel}
 Let $R$ be a $4$-curvature operator, $q(x) = \mathrm{disc_y}(\det(R -xI-yK))$. Assume  $\mathrm{disc_x}(q)\neq 0$. Then for any real $x_1$ such that $q(x_1)=0$, the polynomial  $f(y) = \det(R -x_1I-yK)$ has at least one multiple root which is real.
\end{lem}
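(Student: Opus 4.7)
The plan is to argue by contradiction: suppose $q(x_1)=0$ but $f(y):=\det(R-x_1I-yK)$ has no real multiple root. Since $f$ has real coefficients, its non-real multiple roots come in complex conjugate pairs, so there exist distinct multiple roots $y_1$ and $\bar y_1$ of $f$, neither of which is real. I will show that these two distinct multiple roots force $q$ to vanish to order at least $2$ at $x_1$, contradicting $\mathrm{disc_x}(q)\neq 0$.

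First, observe that Lemma \ref{mroot} and its proof are purely algebraic: the required vanishings $a_{00}=f(y_1)=0$ and $a_{01}=f'(y_1)=0$ follow only from $y_1$ being a multiple root of $f$, so the lemma applies equally well for complex $y_1$. Applying it at $y_1$: if $a_{10}(y_1)=\partial_x F(x_1,y_1)$ vanishes (with $F(x,y):=\det(R-xI-yK)$), then $x_1$ is already a multiple root of $q$, contradicting $\mathrm{disc_x}(q)\neq 0$. So $a_{10}(y_1)\neq 0$, and complex conjugation gives $a_{10}(\bar y_1)=\overline{a_{10}(y_1)}\neq 0$ as well.

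Next, I analyze the local behaviour of $F=0$ at $(x_1,y_1)$. Let $m\ge 2$ be the multiplicity of $y_1$ as a root of $f$; then $\partial_y^k F(x_1,y_1)=0$ for $k<m$ while $\partial_y^m F(x_1,y_1)\neq 0$, and combined with $\partial_x F(x_1,y_1)\neq 0$, the standard Puiseux analysis yields $m$ local branches of $F=0$ through $(x_1,y_1)$ of the form
\[ r_i(x) - y_1 = \omega_i\,c\,(x-x_1)^{1/m} + O\bigl((x-x_1)^{2/m}\bigr),\qquad i=1,\dots,m, \]
with the $\omega_i$ the $m$-th roots of unity and $c\neq 0$. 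Writing the discriminant up to a nonzero constant as $q(x)=\mathrm{const}\cdot\prod_{i<j}(r_i(x)-r_j(x))^2$, the cluster of $m$ branches at $y_1$ contributes a factor of order $\binom{m}{2}\cdot\frac{2}{m}=m-1\ge 1$ at $x_1$, and by the same analysis (together with conjugation) the independent cluster at $\bar y_1$ contributes another factor of order $\ge 1$.

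Thus $q$ vanishes to order at least $2(m-1)\ge 2$ at $x_1$, making $x_1$ a multiple root of $q$ and contradicting $\mathrm{disc_x}(q)\neq 0$. The main technical point is the Puiseux local analysis when $m\ge 3$; in the generic case $m=2$ one can bypass Puiseux entirely, as the smooth vertical-tangent geometry at $(x_1,y_1)$ gives $r_1(x)-r_2(x)\sim c\sqrt{x-x_1}$ by elementary perturbation, so each of the two conjugate double roots contributes a simple zero to $q$ and together they already produce a double zero.
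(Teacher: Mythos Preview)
Your argument is correct and takes a genuinely different route from the paper's. The paper does not compute the vanishing order of $q$ at $x_1$ directly; instead it argues by openness and genericity in the space of operators $R$. Concretely, the paper observes that if some $R$ with $\mathrm{disc}(q)\neq 0$ had a real simple root $x_1$ of $q$ for which all multiple roots of $f(y)=\det(R-x_1I-yK)$ were non-real, then (since $x_1$ is simple, hence stable) the same would hold for all nearby operators; but ``$f$ has at least two distinct multiple roots'' forces the first subdiscriminant of $f$ to vanish as well, so $\mathrm{disc}_y$ and $\mathrm{subdisc}_y$ of $\det(R-xI-yK)$ share a common factor in $x$, a condition that cuts out a measure-zero set of operators. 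An open set contained in a measure-zero set is empty, giving the contradiction.

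By contrast, you work entirely at the fixed operator $R$: after using Lemma~\ref{mroot} (valid verbatim for complex $y_1$, since the Sylvester-matrix computation is purely algebraic and $\mathrm{disc}_y$ is translation invariant in $y$) to secure $a_{10}\neq 0$, your Newton--Puiseux computation shows that each cluster of $m\ge 2$ colliding roots contributes a factor $(x-x_1)^{m-1}$ to $q$, and the conjugate pair of non-real clusters then forces $\mathrm{ord}_{x_1}q\ge 2(m-1)\ge 2$. This is more self-contained and sharper: it gives the exact local order of $q$ rather than appealing to perturbations of $R$ and an unproved genericity claim about the coprimality of $\mathrm{disc}_y$ and $\mathrm{subdisc}_y$. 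The paper's approach, on the other hand, avoids Puiseux series entirely and stays at the level of (sub)resultants, which some readers may find conceptually lighter. Either way, the $m=2$ case you single out already covers everything once Lemma~\ref{mroot} is in hand, so the Puiseux step is mild.
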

\begin{proof}
For a polynomial $f$, $\mathrm{disc}(f) = 0$ if and only if there exists an $y_1$ such that $y_1$ is a multiple root of $f$. In general, $y_1$ does not need to be real. Assume in our particular case that there exists a real $x_1$  such that the set of multiple roots of $f(y) =\det(R -x_1I-yK)$ is non-empty, yet all the multiple roots are non-real. All the coefficients of $f$ are real, therefore all non-real roots come in conjugate pairs. If $f$ has a non-real multiple root, then $f$ has several. Consider a small perturbation $R'$ of $R$ in the space of operators (implicit function theorem). Since $x_1$ is a degree-$1$ root of $q$, it is stable. Therefore there exists a perturbation $x_1'$ such that $f'(y) =\det(R' -x_1'I-yK)$ also has multiple roots. Due to $f'(y)$ being in the neighborhood of $f$, for a small enough perturbation, those roots must also be non-real. Due to the coefficients remaining real, the roots must also come in conjugate pairs. Therefore, if an operator $R$ outside the $ \mathrm{disc}(q)=0$ set has an $x_1$ for which  $f(y) =\det(R -x_1I-yK)$ has only non-real roots, then this property is true for an open neighborhood of $R$ in the space of operators with $ \mathrm{disc}(q) \neq0$. However, when, for a given $x_1$ an $f$ has non-real multiple roots, then it has several multiple roots. This means not only that $\mathrm{disc}(f)$ = 0, but the first subdiscriminant of  $f$ is zero. This means $gcd(\mathrm{disc_y}(\det(R -xI-yK)),\mathrm{subdisc_y}(\det(R -xI-yK)))$ has degree greater than 0. Generically, this does not happen. For the majority of operators, for any given $x_1$ with $q(x_1)=0$, $f(y) =\det(R -x_1I-yK)$ has only 1 double root. Therefore, the set of operators that have an $x_1$ such that $f(y) =\det(R -x_1I-yK)$ has several multiple roots is of measure $0$ in the space of operators. Combining with the previous property, we obtain that all operators $R$ outside of $\mathrm{disc}(q)=0$ have, for every $x_1$ such that $q(x_1)=0$, at least one multiple root of $f(y) =\det(R -x_1I-yK)$ which is real.
\end{proof}

\begin{thm} \label{nt}
Let $R$ be a $4$-curvature operator, $q(x) = \mathrm{disc_y}(\det(R -xI-yK))$. If $\mathrm{disc}(q)\neq 0$ then all the real roots $x_1$ of $q$ correspond to critical points $(x_1, y_1)$ of $R$.
\end{thm}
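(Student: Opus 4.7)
My plan is to stitch together Lemmas \ref{rel} and \ref{mroot} with Definition \ref{d1}. Given a real root $x_1$ of $q$, I would first apply Lemma \ref{rel} to obtain a \emph{real} multiple root $y_1$ of $f(y) = \det(R - x_1 I - yK)$; this is precisely the step that uses the hypothesis $\mathrm{disc}(q)\neq 0$. The remaining task then reduces to checking, purely algebraically, that this pair $(x_1,y_1)$ satisfies the criterion in Definition \ref{d1}.

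To do this, I would set $R' = R - x_1 I - y_1 K$ and $p(x,y) = \det(R' - xI - yK) = \sum a_{mn} x^m y^n$, and read off the low-order coefficients. The identity $p(0,y) = \det(R - x_1 I - (y+y_1)K) = f(y+y_1)$ shows that the $y$-roots of $p(0,y)$ are those of $f$ shifted by $-y_1$, so $y=0$ is a double root of $p(0,y)$. This immediately forces $a_{00} = a_{01} = 0$. Next I would rule out the degenerate case $a_{10} = 0$ by invoking Lemma \ref{mroot} contrapositively: if $a_{10} = 0$ held, then $x_1$ would be a multiple root of $q$, contradicting $\mathrm{disc}(q)\neq 0$. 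Hence $a_{10}\neq 0$.

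Putting this together, $P_0 = a_{00} = 0$, so the first non-zero homogeneous component of $p$ is $P_1(x,y) = a_{10} x + a_{01} y = a_{10} x$. Setting $y=1$, the coefficient sequence of $P_1(x,1)$ is $(0, a_{10})$ with $a_{10}\neq 0$, which is neither all of (strict) constant sign nor strictly alternating, since the presence of a zero precludes both patterns. Applying Definition \ref{d1} then certifies that $(x_1,y_1)$ is a critical point of $R$, which is what we need.

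The bulk of the difficulty has been pushed into the preceding lemmas: Lemma \ref{rel} is where the hypothesis $\mathrm{disc}(q)\neq 0$ is genuinely exploited (via a perturbation/genericity argument for the reality of the multiple root), and Lemma \ref{mroot} is what eliminates the degenerate sub-case $a_{10} = 0$ that would otherwise create a ``false'' root of $q$. The remaining argument is essentially bookkeeping; the only mildly subtle point is the edge-case reading of ``same sign'' and ``strictly alternating'' when one of the relevant coefficients is literally zero, but both conventions in Lemma \ref{lem1} and Definition \ref{d1} exclude that pattern, which is exactly why $(x_1,y_1)$ correctly registers as a critical point.
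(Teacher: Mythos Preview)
Your proposal is correct and follows essentially the same route as the paper: invoke Lemma \ref{rel} to produce a real multiple root $y_1$, read off $a_{00}=a_{01}=0$ from the double-root condition, use Lemma \ref{mroot} contrapositively to force $a_{10}\neq 0$, and then verify the sign criterion in Definition \ref{d1}. The only difference is cosmetic: you spell out the $P_1(x,1)=(0,a_{10})$ sign check explicitly, whereas the paper simply asserts that $a_{00}=0$, $a_{01}=0$, $a_{10}\neq 0$ suffice for Definition \ref{d1}.
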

\begin{proof}
We already know that if $(x_1,y_1)$ is a critical point of $R$ then $q(x_1)=0$ (from Theorem \ref{tx}). For the converse, we combine the previous lemmas:
For $q(x_1)=0$, $\mathrm{disc}(q)\neq 0$, there exists at least one real $y_1$ such that $y_1$ is a multiple root of $f(y) =\det(R -x_1I-yK)$ (from Lemma \ref{rel}). Therefore, for that $y_1$, we have $a_{00}(x_1,y_1) = 0$ and $a_{01}(x_1,y_1) =0$ (these are required conditions for $y_1$ to be a multiple root of $f$). Additionally,  since $\mathrm{disc}(q)\neq 0$, we have $a_{10}(x_1,y_1) \neq 0$ (form Lemma \ref{mroot}). Therefore $y_1$ is real and satisfies  $a_{00} = 0, a_{01} =0, a_{10} \neq 0$. These are sufficient conditions to classify  $(x_1,y_1)$ as a critical point (of kernel dimension 1) (see Definition \ref{d1}).
\end{proof}
Since the $x$ components of critical points correspond to the critical values of $f(v) = vRv$ where $f$ is defined on the set of norm-$1$ decomposable $2$-forms, when $\mathrm{disc}(q) \neq 0$,  the boundaries of the set of real roots of  $q(x) = \mathrm{disc_y}(\det(R -xI-yK))$ give us the maximum and minimum values for the sectional curvature or $R$. These can be checked computationally by applying the signed remainder sequence to $q$.

We recover our {\bf main result} (Theorem \ref{main}) by combining theorems \ref{nt} and \ref{tx}.
\begin{cor}
Given $R$ a $4$-curvature operator on  $\Lambda^2$, the upper and lower bounds of the set of real roots of $q(x) = \mathrm{disc_y}(\det(R-xI-yK))$ are the same as the upper and lower bounds of the sectional curvature or $R$, where $I$ is the identity operator on $2$-forms and $K$ is the $4$-volume form.
\end{cor}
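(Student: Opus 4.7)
The plan is to deduce the corollary from Theorem \ref{main} by combining it with compactness and density. On the open dense set $\mathcal{U} = \{R : \mathrm{disc_x}(q_R) \neq 0\}$, Theorem \ref{main} identifies the real roots of $q_R$ with the critical values of the sectional curvature $f:S_2 \to \mathbb{R}$, $f(v) = vRv$. Since $S_2$ is compact and $f$ is continuous, the extrema $M(R) = \max f$ and $m(R) = \min f$ are attained, and are therefore critical values of $f$. Hence on $\mathcal{U}$ the maximum and minimum real roots of $q_R$ are exactly $M(R)$ and $m(R)$, which proves the corollary on this dense open set.

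For a general $R_0$ I would approximate by a sequence $R_n \to R_0$ with $R_n \in \mathcal{U}$ and argue by continuity. The maps $R \mapsto M(R)$ and $R \mapsto m(R)$ are continuous (max/min of a function jointly continuous on the compact set $S_2$), so $M(R_n) \to M(R_0)$ and $m(R_n) \to m(R_0)$. In the easy direction Theorem \ref{tx} already gives that $M(R_0)$ and $m(R_0)$ are real roots of $q_{R_0}$, whence $M(R_0) \leq r(R_0)$ and $\rho(R_0) \leq m(R_0)$, writing $r(R)$ and $\rho(R)$ for the max and min real roots of $q_R$.

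The hard part will be establishing the reverse inequalities $r(R_0) \leq M(R_0)$ and $\rho(R_0) \geq m(R_0)$ at non-generic $R_0$. This is delicate because $r$ and $\rho$ are only semicontinuous in $R$: a real double root of $q_R$ may split into a complex conjugate pair under perturbation, so real roots can disappear in the limit, and one cannot simply pass $r(R_n) = M(R_n)$ through. To close this gap I would argue by contradiction. A putative real root $x^* > M(R_0)$ of $q_{R_0}$ must be a multiple root, since a simple real root would persist under perturbation into $\mathcal{U}$ as a simple real root and hence as a critical value of the nearby operator, forcing $x^* \leq M(R_0)$ by continuity of $M$. Analyzing the structure of the multiple root of $\det(R_0 - x^* I - yK)$ together with the Lagrange conditions of Proposition \ref{prop1}, via a real/imaginary decomposition of the (possibly complex) kernel vector and an intermediate-value argument on the restriction of $K$ to the real part of that kernel, should then produce a genuine real critical point of $L_{R_0}$ at value $x^*$, contradicting $x^* > M(R_0)$. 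A symmetric argument rules out $\rho(R_0) < m(R_0)$.
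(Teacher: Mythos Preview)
The paper does not give a standalone proof of this corollary: it is presented as an immediate consequence of the paragraph preceding it, which says that \emph{when $\mathrm{disc}(q)\neq 0$} the real roots of $q$ coincide with the critical values of sectional curvature, so their extrema coincide. Your first paragraph reproduces exactly this argument on $\mathcal{U}$, and that is all the paper actually establishes; the corollary as printed drops the genericity hypothesis, but the paper makes no attempt to justify it in that generality.

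Your second and third paragraphs therefore go beyond the paper. The easy direction and the simple-root case of the hard direction are handled correctly: a simple real root of a real polynomial persists as a real root under small real perturbations, so passing to $R_n\in\mathcal{U}$ and using $x_n^*\le M(R_n)\to M(R_0)$ is valid. The gap is in the multiple-root case. There you only offer a heuristic (``a real/imaginary decomposition of the (possibly complex) kernel vector and an intermediate-value argument \dots\ should then produce a genuine real critical point''), without specifying which operator's kernel you mean, why the real and imaginary parts of a complex null vector of $R_0-x^*I-y^*K$ span a subspace on which $K$ changes sign, or why the resulting $v$ satisfies all three Lagrange conditions of Proposition~\ref{prop1}. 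A double real root of $q_{R_0}$ can split into a complex conjugate pair under perturbation, so the approximation argument genuinely breaks down here, and you have not supplied the missing mechanism. In short: your proof matches the paper on $\mathcal{U}$, attempts more than the paper does off $\mathcal{U}$, and leaves that extra part incomplete.
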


\section{Higher dimensional curvature operators}
Here we briefly discus higher dimensional curvature operators.We examine analogues to the previous theorems and their failures.

As before, we aim to maximise $vRv$ subject to constraints $v \wedge v =0$ and $vIv = 1$, where $I$ is the norm on $2$-forms of dimension $n$ (this is $\widetilde{Gr}(2,n)$, the Grassmannian of oriented $2$-planes in $\mathbb{R}^n$).
Due to the compactness of $\widetilde{Gr}(2,n)$, the critical points give us the bounds for the sectional curvature. The equation $v \wedge v =0$  gives us $\binom{n}{4}$ constraints (see [Thorpe \cite{J},\cite{J2}]), [Singer, \cite{ST}, section 2], and P{\"u}tman  \cite{P}, Section 3.1]).

 Using Lemma $\ref{vform}$, the Lagrangian function is:\\
\begin{eqnarray} L_R(v,x,y_1, y_2,... y_{\binom{n}{4}}) = vRv - x\cdot (vIv-1) - \sum_{p=1}^{\binom{n}{4}} y_p\cdot (vK_{p}v). \label{lr2} \end{eqnarray}
where $K_p$ form a basis on the set of $4$-forms. Its gradient gives us:
\begin{itemize}
\item[1)] $vIv = 1$;
\item[2)] $vK_pv = 0$;
\item[3)] $(R-xI-\sum_{p=1}^{\binom{n}{4}} y_p K_p)v=0$.
\end{itemize}
with $vRv = x$, as in the $4$-dimensional case.
The next step was to reformulate the problem in a way that does not make explicit reference to $v$. We shall only give a partial characterisation of the conditions required for the existence of $v$ satisfying $1)$, $2)$, $3)$. We are looking for $\binom{n}{4} + 1$-uples
$(x,y_1,y_2,...y_{\binom{n}{4}})$ such that there exists a $v$ satisfying the above equations.  Analogously to $n=4$, we shall name these as critical points of $R$ (points $(x_1,y_1,y_2,...y_{\binom{n}{4}})$ such that there exists a $2$-form $v$ making $(v,x,y_1, y_2,... y_{\binom{n}{4}})$ a critical value of $L_R$.\\\\
We shall now give a higher dimensional analogue of Lemma \ref{lem2}:
\begin{lem} \label{nxt}
Assume $q = (x,y_1,y_2,...y_{\binom{n}{4}})$ is a critical point of $R$. \\Define $R' = R-x_{1}I-\sum_{s=1}^{\binom{n}{4}}y_{s}K_s$. \\Let $p(x,z_1,...z_{\binom{n}{4}}) = \det(R' -xI-\sum_{s=1}^{binom{n}{4}}z_{s}K_s)$.  \\
Let $\alpha_0$ be the constant coefficient of the multinomial $p$ and $\alpha_{0s}, s \in \{1,..\binom{n}{4} \} $ be the coefficients corresponding to the monomials ${z_s}^1$.\\
Then $\alpha_0 = 0$ and $\forall s, \alpha_{0s} = 0$.
\end{lem}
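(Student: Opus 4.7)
The plan is to mirror the proof of Lemma \ref{lem2}, using the Marcus--Minc determinant expansion that underlies Lemma \ref{lem1}. By definition of critical point there exists $v \ne 0$ satisfying $vIv = 1$, $vK_p v = 0$ for every $p$, and $R'v = 0$; in particular $\det(R') = 0$, so $\alpha_0 = p(0, 0, \ldots, 0) = \det(R')$ vanishes immediately.

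For the linear coefficients, I would expand $\det(A+B)$ with $A = R'$ and $B = -xI - \sum_{t} z_t K_t$ via the formula cited in the proof of Lemma \ref{lem1}. Writing $N$ for the size of $R'$ (so $N = \binom{n}{2}$), the factor $\det(B(\alpha|\beta))$ is homogeneous of total degree $N - |\alpha|$ in the variables $x, z_1, \ldots$; extracting the monomial $z_s$ therefore forces $|\alpha| = |\beta| = N-1$, the factor from $B$ reduces to a single entry $B_{ab}$ with $z_s$-coefficient $-(K_s)_{ab}$, and the remaining factor is the corresponding signed cofactor of $R'$. Assembling signs gives
\begin{equation*}
\alpha_{0s} \;=\; -\operatorname{tr}\bigl(\operatorname{adj}(R')\,K_s\bigr).
\end{equation*}

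To finish I would split on $\dim \ker R'$. If $\dim \ker R' \ge 2$ then $R'$ has rank at most $N-2$, every $(N-1)\times(N-1)$ minor vanishes, and $\operatorname{adj}(R') = 0$. If $\dim \ker R' = 1$, with $\ker R' = \mathbb{R} v$, then the identities $R'\operatorname{adj}(R') = \operatorname{adj}(R')\,R' = 0$ give $\operatorname{im}\operatorname{adj}(R') \subseteq \mathbb{R} v$ and $\ker\operatorname{adj}(R') \supseteq \operatorname{im}(R') = v^{\perp}$; combined with the symmetry of $\operatorname{adj}(R')$ inherited from that of $R'$, this forces $\operatorname{adj}(R') = c\,vv^{T}$ for some scalar $c$, and hence $\operatorname{tr}(\operatorname{adj}(R')\,K_s) = c \cdot vK_s v = 0$ by the critical-point constraint. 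The main obstacle is the rank-$1$ case: one must pin down $\operatorname{adj}(R')$ as a scalar multiple of $vv^{T}$, which relies crucially on the symmetry of $R'$ (and hence of its adjugate); the higher-corank case collapses to a routine rank statement about the adjugate.
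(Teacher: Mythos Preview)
Your argument is correct. The identification $\alpha_{0s} = -\operatorname{tr}(\operatorname{adj}(R')K_s)$ is just Jacobi's formula (or, equivalently, the degree-one piece of the Marcus expansion), and your dichotomy on $\dim\ker R'$ is clean: in corank $\ge 2$ the adjugate vanishes identically, while in corank $1$ the relations $R'\operatorname{adj}(R')=\operatorname{adj}(R')R'=0$ together with symmetry pin down $\operatorname{adj}(R')=c\,vv^{T}$, whence $\alpha_{0s}=-c\,vK_s v=0$ by the Lagrange constraint.

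The paper takes a different, more modular route: it freezes all variables $z_t$ with $t\neq s$ and observes that the resulting two-variable determinant $\det(R'-xI-z_sK_s)$ is exactly the object treated in Lemma~\ref{lem2}, so that lemma (and behind it the sign analysis of Lemma~\ref{lem1} and Theorem~\ref{t_1}) already forces $\alpha_{0s}=0$. Your approach is more elementary and self-contained---it bypasses the ``neither same sign nor alternating'' machinery entirely and exposes the mechanism directly through the rank-one structure of the adjugate---while the paper's approach has the virtue of reusing the $n=4$ lemma verbatim rather than reproving its content. Both proofs split on the corank of $R'$ in exactly the same way; they differ only in how the corank-one case is discharged.
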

\begin{proof}
If $q$ is a critical point, then there exists a $v$ that is simultaneously in the kernel of $R'$, and in the intersection of the null-cones of $K_s$, for all $s$. From the kernel of $R'$ not being null, we get $\alpha_0 = \det(R')=0$. For the rest of the coefficients we shall be selectively applying Lemma \ref{lem2}: if $v$ is in the intersection of the null-cones of $K_s$, then, $\forall s$, the intersection of the kernel of $R'$ and the null-cone of $K_s$ is non-trivial. Selecting a specific $p$, and setting all $z_s, s\neq p$ to $0$, gives us a situation where we can apply Lemma \ref{lem2}, proving $\alpha_{0p} = 0$. This completes the proof.
\end{proof}

In $n=4$, if for a given $x_1$ there exists an $y_1$ making $(x_1,y_1)$ a critical point of $R$, then $y_1$ is a multiple root of $f(y) = \det(R-x_1I-yK)$ (from Theorem \ref{tx}), thus $\mathrm{disc}(f)=0$. Therefore, the set of $x_1$ such that $(x_1,y_1)$ are critical points of $R$ is a subset of the real roots of $q(x) = \mathrm{disc_y}(\det(R-x_1I-yK))$. In higher dimensions, from the previous lemma, we know that
$(x_1,y_1,y_2,...y_{\binom{n}{4}})$ being a critical point of $R$ implies $(x_1,y_1,y_2,...y_{\binom{n}{4}})$ being a null-valued critical point of $f(z_1,...z_{\binom{n}{4}}) = \det(R -x_1I-\sum_{s=1}^{\binom{n}{4}}z_{s}K_s)$. To obtain a similar situation to  \ref{tx}, we require the multi-variable generalization of the discriminant. \cite{MD}.\\

\noindent We obtain the following generalization of Theorem \ref{tx}:\\

\noindent Let $R$ be an $n$-curvature operator, and $\{K_1, \cdots, K_{\binom{n}{4}} \}$ be a basis for the space of $4$-forms of dimension $n$.\\
Denote $p(x,z_1,...z_{\binom{n}{4}}) = \det(R -xI-\sum_{s=1}^{\binom{n}{4}}z_{s}K_s)$. \\ Let $q(x) = \mathrm{disc_z}(p)$ (the multivariate discriminant of $p$ with respect to the $z$ variables).\\ If $(x_1,y_1, \cdots ,y_{\binom{n}{4}})$ is a critical point of $R$, then we have $q(x_1) = 0$. Therefore, if all the real roots of $q$ are positive (nonnegative), then $R$ is sectionally positive (nonnegative).\\

This seems like a good strategy for describing higher dimensional positivity (non-negativity), however, it fails for one important reason:
\begin{thm}
Let $R$ be an $n$-curvature operator, $n\geq 5$, and $\{K_1, \cdots,K_{\binom{n}{4}} \}$ be a basis for the space of $4$-forms of dimension $n$.\\
Denote $p(x,z_1,...z_{\binom{n}{4}}) = \det(R -xI-\sum_{s=1}^{\binom{n}{4}}z_{s}K_s)$. \\ Let $q(x) = \mathrm{disc_z}(p)$. Then $q(x) = 0$.
\end{thm}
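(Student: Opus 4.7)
The plan is to prove the stronger statement that the projective hypersurface $\{\bar p(x_0,z_0,z)=0\}\subset\mathbb{P}^k$, obtained by homogenising $p(x_0,\cdot)$ in the $z$-variables with an extra coordinate $z_0$, has a singular point for \emph{every} value of $x_0$. By the standard Gel'fand--Kapranov--Zelevinsky characterisation of the multivariate discriminant, this forces $\mathrm{disc_z}(p)(x_0)=0$ for every $x_0$, whence $q(x)$ is identically zero as a polynomial in $x$. I will exhibit one singular point at infinity $[0:z^*]$ that works uniformly in $x_0$.

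Set $B=R-x_0I$ and decompose $p(x_0,z)=\det\bigl(-\sum_s z_s K_s + B\bigr)$ into homogeneous parts $p=p_N+p_{N-1}+\cdots+p_0$, where $N=\binom{n}{2}$. The Laplace expansion used in Lemma~\ref{lem1}, separated by degree in $z$, yields $p_N(z)=\det\bigl(-\sum_s z_sK_s\bigr)$ and $p_{N-1}(z)=\mathrm{tr}\bigl(\mathrm{adj}(-\sum_s z_sK_s)\cdot B\bigr)$. A point $[0:z^*]$ at infinity is a singular point of $\bar p=0$ exactly when $\partial p_N/\partial z_s(z^*)=0$ for every $s$ and $p_{N-1}(z^*)=0$ (the second condition coming from $\partial\bar p/\partial z_0$ at $z_0=0$); the condition $p_N(z^*)=0$ then follows automatically by Euler's identity applied to the homogeneous form $p_N$. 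After choosing the basis of $4$-forms so that $K_1=e^{i_1}\wedge e^{i_2}\wedge e^{i_3}\wedge e^{i_4}$ is a ``pure'' wedge --- which is permissible since vanishing of $\mathrm{disc_z}(p)$ is invariant under invertible linear changes of the $z$-variables --- I take $z^*=(1,0,\ldots,0)$, so that $\sum_s z^*_sK_s=K_1$.

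As an operator on $\Lambda^2(\mathbb{R}^n)$, $K_1$ acts non-trivially only on the $6$-dimensional subspace $\Lambda^2(V)$ with $V=\mathrm{span}(e_{i_1},\ldots,e_{i_4})$, and annihilates its orthogonal complement of dimension $\binom{n}{2}-6\geq 4$ for $n\geq 5$. Hence $K_1$ has corank $\geq 2$; every $(N-1)\times(N-1)$ minor of $K_1$ vanishes, and so $\mathrm{adj}(K_1)=0$. This one identity delivers every singularity condition simultaneously: $\partial p_N/\partial z_s(z^*)=-\mathrm{tr}(\mathrm{adj}(-K_1)K_s)=0$ for all $s$, and crucially $p_{N-1}(z^*)=\mathrm{tr}(\mathrm{adj}(-K_1)\cdot B)=0$ regardless of $x_0$. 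Thus $[0:z^*]$ is a singular point of $\bar p$ for every $x_0$. The main delicate step is the extra condition $p_{N-1}(z^*)=0$ forced by homogenisation: a singular point of the leading form $p_N$ alone would not lift to a singular point of $\bar p$, and it is precisely the stronger identity $\mathrm{adj}(K_1)=0$ --- available for $n\geq 5$ but failing in the $n=4$ case where $K$ is invertible --- that makes this condition hold uniformly in $x_0$ and thereby forces $q\equiv 0$.
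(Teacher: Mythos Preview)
Your proof is correct under the standard projective (GKZ) definition of the multivariate discriminant, and it follows a genuinely different route from the paper's argument. The paper argues by a dimension count: for $n\ge 5$ the space of $4$-forms has dimension $\binom{n}{4}\ge 5>3$, while the symmetric operators of corank $\ge 2$ form a variety of codimension $3$, so for a non-null set of pairs $(x,R)$ one can find an \emph{affine} $4$-form $S$ with $R-xI-S$ of corank $\ge 2$; Lemma~\ref{lem1} then gives an affine singular point of $p(x,\cdot)=0$, whence $q(x,R)=0$ on a set of positive measure and therefore identically. You instead exhibit, for \emph{every} $(x,R)$, a singular point of the projectivised hypersurface lying at infinity: the direction of any decomposable $4$-form $K_1$, whose action on $\Lambda^2$ has rank $6<\binom{n}{2}-1$ for $n\ge 5$, so that $\mathrm{adj}(K_1)=0$ kills simultaneously $p_N$, its gradient, and $p_{N-1}$. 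Your argument is more explicit and avoids the genericity step; the paper's argument has the virtue of producing affine singular points and so does not depend on the discriminant detecting singularities at infinity.

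The only point worth flagging is that the paper cites Sharipov \cite{MD} for the multivariate discriminant rather than GKZ, and the paper's own proof uses only affine singular points; you should check that the discriminant being used really is the projective one (equivalently, that it vanishes whenever the homogenisation is singular). For any reasonable resultant-based definition this is the case, just as the univariate discriminant vanishes when the top coefficient drops---a phenomenon the paper itself handles carefully in Lemma~\ref{lk}---so this is a matter of matching conventions rather than a gap in the mathematics.
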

\begin{proof}
When $\gamma = (x,y_1,y_2,...y_{\binom{n}{4}})$ is a critical point of $R$,  $\alpha_0 = 0$ and $\forall s, \alpha_{0s} = 0$ (from Lemma \ref{nxt}).
But, due to Lemma \ref{lem2}, this also happens when the rank of  $(R -xI-\sum_{s=1}^{\binom{n}{4}}y_{s}K_s)$ is less than $n-1$. For $n\geq5$, the subset of operators $R$, such that there exists a $4$-form $S$ making $R-S$ of rank $n-2$ is of non-zero measure. We can consider our function   $q_R(x) = \mathrm{disc_z}(p)$ as a polynomial function over the pairs $(x,R)$. Due to the previous property, it will give $0$ on a  non-null set, therefore $0$ everywhere.
\end{proof}

\subsection{Some observations:}
For a given operator $R$, we may describe
$$ p(x,z_1,..., z_{\binom{n}{4}}) = \det(R -xI-\sum_{s=1}^{\binom{n}{4}}z_{s}K_s)$$ in a geometric manner, without resorting to a specific basis. We can define $$p_{R}:(\mathbb{R}, \Lambda^4) \rightarrow \mathbb{R},$$ $$p_{R}(x,W)= \det(R - Ix- W)$$ where $\Lambda^4$ is the space of $4$-forms. Even though for $n\geq5$  the discriminant of $p$ with respect to the coefficients of $W$ is $0$, we suspect a higher-dimensional analogue of \label{thm1} exists, allowing us to characterise the sectional curvature of $R$ from the coefficients of the polynomial $p_{R}$. Studying $p_{R}$ allows the application of algebraic methods to the geometric properties of $R$.

\section{Applications}
Here we apply the previous theorems for particular cases and examine the use of algebraic methods for previously known results.

\begin{defn}
A curvature operator $R$ is known as {\it strongly positive} if there exists a $4$-form $K_n$ such that $R+K_n$ is positive definite \cite{S}.
\end{defn}
All strongly positive curvature operators are sectionally positive. This is known as {\it Thorpe’s trick} (see Thorpe \cite{J},\cite{J2}, Singer and Thorpe \cite{ST}, and P{\"u}ttmann \cite{P} ). For $n=4$, the converse also holds:

\begin{thm}
Let $n=4$ and $R$ be a sectionally positive (nonnegative) curvature operator. Then there exists  $y_1$ such that $R-y_1K$ is positive (semipositive) definite. In other words, in dimension 4, all sectionally positive curvature operators are strongly positive (\cite{S},Proposition 2.2).

\end{thm}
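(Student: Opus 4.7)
The plan is to take $y_1$ to be a maximizer of $\lambda_{\min}(y)$, the smallest eigenvalue of the symmetric operator $R - yK$ on $\Lambda^2$. Writing $\lambda_{\min}(y) = \min\{v(R-yK)v : vIv = 1\}$ exhibits it as an infimum of affine functions of $y$, hence concave and continuous. In dimension $4$, $K$ acts on $\Lambda^2$ with eigenvalues $\pm 1$, so testing against unit vectors in either eigenspace yields $\lambda_{\min}(y) \leq C_\pm \mp y$ for constants $C_\pm$, whence $\lambda_{\min}(y) \to -\infty$ as $|y| \to \infty$. Therefore the maximum is attained at some $y^* \in \mathbb{R}$, and it suffices to show $\lambda_{\min}(y^*) > 0$ in the sectionally positive case (respectively $\geq 0$ in the nonnegative case).

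Let $E = \ker(R - y^*K - \lambda_{\min}(y^*)I) \subset \Lambda^2$ be the ground-state eigenspace. The central step is to produce a unit vector $v \in E$ with $vKv = 0$. To this end I would apply first-order degenerate perturbation theory: the eigenvalues of $R - (y^* + \epsilon)K$ bifurcating from $\lambda_{\min}(y^*)$ are, to order $\epsilon$, the values $\lambda_{\min}(y^*) - \epsilon \mu_j$, where $\mu_1 \leq \cdots \leq \mu_m$ are the eigenvalues of the self-adjoint operator $K|_E$ on $E$ (given by the Rayleigh form $vKv$). For $y^*$ to be a local maximum of the concave function $\lambda_{\min}$, both $\mu_1 \leq 0$ and $\mu_m \geq 0$ are forced. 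If either vanishes, the corresponding unit eigenvector of $K|_E$ already has $vKv = 0$; otherwise $\mu_1 < 0 < \mu_m$, so $\dim E \geq 2$, and the intermediate value theorem on the connected unit sphere of $E$ supplies the desired $v$.

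Granted such a $v$, the triple $(v, \lambda_{\min}(y^*), y^*)$ satisfies the three critical-point conditions of the Lagrangian $L_R$ from Proposition \ref{prop1}: namely $vIv = 1$, $vKv = 0$, and $(R - \lambda_{\min}(y^*)I - y^*K)v = 0$. Hence $(\lambda_{\min}(y^*), y^*)$ is a critical point of $R$ in the sense of Definition \ref{d1}, and Theorem \ref{thm1} then forces $\lambda_{\min}(y^*) > 0$; thus $R - y^*K$ is positive definite and $y_1 := y^*$ works. The semipositive case is identical with strict inequalities replaced by weak ones throughout.

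The main obstacle I anticipate is the perturbation-theoretic step in the degenerate case $\dim E > 1$, where some care is needed to justify that the $\mu_j$'s govern the first-order behavior of $\lambda_{\min}$. This can equivalently be done by a subdifferential argument exploiting the concavity of $\lambda_{\min}$: the optimality condition at $y^*$ reads $0 \in \partial \lambda_{\min}(y^*) = \mathrm{conv}\{-vKv : v \in E,\, vIv = 1\}$, and the same connectedness argument on the unit sphere of $E$ supplies $v$ with $vKv = 0$. Either route reduces the existence of $y_1$ to the Lagrangian critical-point analysis already carried out in Theorem \ref{thm1}, so no new machinery is required.
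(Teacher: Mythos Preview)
Your proof is correct and shares the paper's core idea: maximize $\lambda_{\min}(y)$, the smallest eigenvalue of $R-yK$, and show that the maximizer $y^*$ yields a critical point of the sectional curvature, whence $\lambda_{\min}(y^*)>0$ by Theorem~\ref{thm1}. The difference lies in how degeneracy is handled. The paper first restricts to the generic locus $\mathrm{disc}_x(q)\neq 0$, where by Lemma~\ref{mroot} the six eigenvalue branches $\alpha_i(y)$ stay simple; then $\alpha_1$ is smooth, its critical point $y_1$ gives $a_{00}=a_{01}=0$, $a_{10}\neq 0$ directly from the characteristic polynomial, and the non-generic case is disposed of by a limit argument in the space of operators. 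You instead treat all cases uniformly: the concavity of $\lambda_{\min}$ together with the subdifferential identity $\partial\lambda_{\min}(y^*)=\mathrm{conv}\{-vKv:v\in E,\ vIv=1\}$ (which is already an interval since the unit sphere of $E$ is connected) produces the required $v\in E$ with $vKv=0$ without any genericity assumption or limiting step. Your route is more elementary and self-contained, avoiding the discriminant machinery entirely; the paper's route, by contrast, illustrates how the algebraic apparatus of Section~3 can be brought to bear on this classical result.
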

\begin{proof}
We shall study the object $p(x,y) = \det(R- Ix-Ky)$. Having $q(x) = \mathrm{disc_y}(p(x,y))$, assume $ \mathrm{disc_x}(q(x))\neq0$.
For any $y$, $R_y = R-yK$ is symmetric, as such, its characteristic polynomial has exactly $6$ real roots, where $6$ is the rank of $R$. Due to $\mathrm{disc_x}(q(x))\neq0$, the roots of the polynomial are always distinct(see Lemma \ref{mroot}), regardless of $y$. As such, we can consider six parametric functions $\alpha_i(y), i\in\{1..6\}$, corresponding to the ordered roots of the characteristic polynomial of $R-kY$. We shall study $\alpha_1(y)$, corresponding to the smallest root, the smallest eigenvalue of $R-Ky$. The operator $K$ has 3 eigenvalues of  $1$ and 3 eigenvalues of $-1$. \\Therefore $\lim_{y \to -\infty} \alpha_1(y) =-\infty$ and $\lim_{y \to \infty} \alpha_1(y) =-\infty$. Therefore $\alpha_1$ has at least one maximum point. Denote that point as $y_1$, and let $\alpha_1(y_1)=x_1$. Examining the $a_{nm}$ coefficients of $p'(x,y)= \det(R- I(x+x_1)-K(y+y_1))$ we see that  $(x_1,y_1)$ is a critical point to $R$ (see Definition \ref{d1}, $a_{00}=0,a_{01}=0,a_{10}\neq 0$). Therefore, if $R$ is sectionally positive (nonnegative), then $x_1$ is positive (nonnegative). But $x_1$ is also the smallest eigenvalue of $R- Ky_1$, thus completing our proof.  For the operators in the null set of $\mathrm{disc_x}(q(x))=0$, use a limit argument in the set of operators (with prior bounds for $y_1$) to obtain the result.
\end{proof}

Then there exists  $y_1$ such that $R+Ky_1$ is positive (semipositive) definite. In other words, in dimension 4, all sectionally positive curvature operators are strongly positive \cite{S}.

\begin{thm}
Let $n=4$ and $R$ be a sectionally semi-positive curvature operator. Let $Z(R)$ be the set of $V$ such that $VKV=0$ and $VRV=0$. Assume $Z(R)\neq \emptyset$. Then there exists a unique $y_1$ such that  $Z(R)$ is the set of  $V$ such that $VKV=0$ and $V(R-Ky_1)=0$ (see \cite[Theorem 4.1]{J}).
\end{thm}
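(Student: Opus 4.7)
The plan is to deduce the result almost directly from the previous theorem: every sectionally nonnegative $4$-curvature operator is strongly nonnegative. That theorem already produces a $y_1$ with $R - y_1 K$ positive semidefinite; the remaining task is to verify that this same $y_1$ characterizes $Z(R)$ in the stated way, and that it is unique once $Z(R)$ is assumed nonempty.

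For the identification of sets, fix the $y_1$ supplied by the previous theorem. If $V \in Z(R)$, then $V(R - y_1 K)V = VRV - y_1 VKV = 0$; since $R - y_1 K$ is symmetric positive semidefinite, any vector on which its associated quadratic form vanishes must lie in its kernel, so $(R - y_1 K)V = 0$. Conversely, if $VKV = 0$ and $(R - y_1 K)V = 0$, contracting the latter with $V$ gives $VRV - y_1 VKV = 0$, so $VRV = 0$ and $V \in Z(R)$. This is exactly the claimed characterization.

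For uniqueness, suppose $y_1$ and $y_1'$ both satisfy the conclusion. Pick a nonzero $V \in Z(R)$, which exists by hypothesis (otherwise the assumption $Z(R) \neq \emptyset$ would be vacuous, since $V=0$ is automatically in the set). Then $(R - y_1 K)V = 0 = (R - y_1' K)V$ gives $(y_1 - y_1')KV = 0$. In dimension $4$, the $4$-volume form $K$ acts on $\Lambda^2$ with eigenvalues $\pm 1$ (three of each, as observed in the proof of the previous theorem) and is therefore invertible, so $KV \neq 0$ and hence $y_1 = y_1'$. The only substantive input is the strong positivity theorem proved just above; the remaining steps reduce to the standard fact that a symmetric positive semidefinite operator annihilates any vector on which its quadratic form vanishes, together with invertibility of $K$ on $\Lambda^2$, so there is no genuine obstacle beyond setting the arithmetic up correctly.
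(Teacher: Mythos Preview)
Your proof is correct and takes a cleaner, more modular route than the paper's. The paper re-enters the eigenvalue analysis from the preceding theorem: it studies the smallest-eigenvalue function $\alpha_1(y)$ of $R-yK$, argues that every critical point of $\alpha_1$ produces a critical value of sectional curvature, deduces that $\alpha_1$ has a \emph{unique} critical point $y_1$ with $\alpha_1(y_1)$ equal to the minimum sectional curvature (here $0$), and only then identifies $Z(R)$ with $\ker(R-y_1K)\cap\{VKV=0\}$. You instead invoke the previous theorem as a black box to obtain $y_1$ with $R-y_1K\succeq 0$, and then the identification of $Z(R)$ follows immediately from the standard fact that a positive semidefinite form vanishes on $V$ exactly when $V$ lies in its kernel. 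For uniqueness both arguments are essentially the same, relying on the triviality of $\ker K$. Your approach is shorter and highlights that nothing beyond strong nonnegativity is needed; the paper's approach yields the extra byproduct that $\alpha_1$ has a single critical point, which is not required for the statement but connects to the surrounding machinery.
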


\begin{proof}
We shall use the same methods as before for the study of the parametric curve $\alpha_1(y)$, giving the smallest eigenvalue of the operator $R-Ky$. As  $\forall $y$,R-Ky$ has the sectional curvature as $R$, the minimum of $R's$ sectional curvature is greater than or equal to $\alpha_1(y)$. Therefore, it is greater than or equal to the maximum value of $\alpha_1(y)$. If $y_1$ is a critical point of $\alpha_1$ then $(\alpha_1(y),y)$ will be a critical point of the sectional curvature (see Definition \ref{d1}). But the maximum of $\alpha_1$  is at most a lower bound of said curvature. Therefore, all critical points of  $\alpha_1$ give the maximum value. Since  $\alpha_1$ has at least one critical point, it has a single critical point, $y_1$. At that point, $\alpha_1(y_1)$'s value is equal to the minimal value of sectional curvature. Let $x_1$ be said value. In our case, $x_1 = 0$. Due to $0$ being the smallest eigenvalue of $R'= R-y_1K$, all vectors $V$ such that $VR'V = 0$ are eigenvectors of $R'$. Therefore, $Z(R)$ is the intersection of the kernel of $R'$ with the set $VKV=0$. For uniqueness, let $V$ be a vector in the kernel of $R'$, such that $VKV=0$. Due to the kernel of $K$ being null, $V$ is not in the kernel of $R'- yK$, $\forall y\neq 0$. This completes the proof.
\end{proof}

\section{Acknowledgements}
I would like to thank my doctoral advisor Ioan Bucataru for guidance in writing this article, Renato Ghini Bettiol for his encouragement and interesting discussions on the topic, and the anonymous reviewer for their thoughtful reading of the work and many helpful suggestions.

\end{document}